\newtheorem{thm}{Theorem}[section]
\newtheorem{prop}{Proposition}[section]
\newtheorem{cor}{Corollary}[section]
\newtheorem{lem}{Lemma}[section]
\newtheorem{df}{Definition}[section]
\theoremstyle{definition}
\newtheorem{rem}{Remark}[section]
\newcommand{\lesim}{\protect\raisebox{-1.0ex}{$\:\stackrel{\textstyle <}{\sim}\:$}} 
\title{{\bf {\large ANOTHER APPLICATION OF DILATION ANALYTIC METHOD FOR COMPLEX LIEB--THIRRING TYPE ESTIMATES}}}
\author{{\sc Norihiro Someyama}}
\date{\empty}
\begin{document}
\maketitle
\markboth{N. Someyama}{Another Application of Dilation Analytic Method for Complex LT-type Estimates}

\begin{abstract}
We consider non-self-adjoint Schr\"{o}dinger operators $H_{{\rm c}}=-\Delta+V_{{\rm c}}$ (resp. $H_{{\rm r}}=-\Delta+V_{{\rm r}}$) acting in $L^2(\mathbb R^d)$, $d\ge 1$, with dilation analytic complex (resp. real) potentials.
We were able to find out perhaps a new application of dilation analytic method in \cite{So1} (N. Someyama, "Number of Eigenvalues of Non-self-adjoint Schr\"{o}dinger Operators with Dilation Analytic Complex Potentials," Reports on Mathematical Physics, Volume 83, Issue 2, pp.163-174 (2019).).
We give a Lieb--Thirring type estimate on resonance eigenvalues of $H_{{\rm c}}$ in the open complex sector and that on embedded eigenvalues of $H_{{\rm r}}$ in the same way as \cite{So1}.
To achieve that, we derive Lieb--Thirring type inequalities for isolated eigenvalues of $H$ on several complex subplanes.
\end{abstract}
\vspace{3mm}

{\small 
{\bf Keywords}:
non-self-adjoint Schr\"{o}dinger operator, dilation analytic complex potential, Lieb--Thirring (type) inequality, complex isolated eigenvalue, resonance eigenvalue, embedded eigenvalue.
}

\section{Introduction}
Let $d\ge 1$ be a dimension of Euclidean space.
We consider the non-self-adjoint Schr\"{o}dinger operator defined as the quasi-maximal accretive operator \cite{Ka} acting in $L^2(\mathbb R^d)$:
\[
H:=H_0+V,\quad H_0:=-\Delta
\]
where the Laplacian $\Delta:=\sum_{j=1}^d\partial^2/\partial x_j^2$ means the distributional derivative and $V$ is the dilation analytic complex potential (see Definition \ref{df:DAC} for detailed definitions).
We define the domain ${\cal D}(H_0)$ of $H_0$ as the second-order Sobolev space $H^2(\mathbb R^d):=W^{2,2}(\mathbb R^d)$.
The $L^2$-inner product and $L^2$-norm are defined by
\[
(u,v):=\int_{\mathbb R^d}u(x)\overline{v(x)}\,{\rm d}x,\quad 
\|u\|_{L^2(\mathbb R^d;\mathbb C)}:=(u,u)^{1/2}
\]
respectively.
Moreover, we consider the one-parameter unitary group $\{U(\theta):L^2(\mathbb R^d)\to L^2(\mathbb R^d);\theta\in \mathbb R\}$ defined by
\[
U(\theta)u(x):=e^{d\theta/2}u(e^{\theta}x)
\]
for $u\in L^2(\mathbb R^d)$.
We put
\begin{align}
H(\theta)&:=U(\theta)HU(\theta)^{-1}
=e^{-2\theta}(H_0+e^{2\theta}V_{\theta}) \label{eq:HtUHU-1}\\
V_{\theta}(x)&:=U(\theta)VU(\theta)^{-1}=V(e^{\theta}x)
\end{align}
and call $H(\theta)$ (resp. $V_{\theta}$) the {\it dilated Hamiltonian} (resp. {\it dilated potential}).
We also call the transform by $U(\theta)$ such as (\ref{eq:HtUHU-1}) the {\it complex dilation}.
We write 
\begin{align}
\label{eq:wtHt}
\widetilde{H}(\theta):=H_0+e^{2\theta}V_{\theta}.
\end{align}
It is of course that $H(0)=\widetilde{H}(0)=H$.
Furthermore, we denote the real (resp. imaginary) part of $z\in \mathbb C$ by ${\rm Re}\,z$ (resp. ${\rm Im}\,z$).

\begin{df}[\cite{So1}]
\label{df:DAC}
$V$ is called the {\rm dilation analytic complex potential} if it satisfies the followings:
Let $d,\gamma \ge 1$.
\begin{enumerate}
\item[i)] $V$ is the multiplication operator with the complex-valued measurable function $\mathbb R^d\ni x\mapsto V(x)\in \mathbb C$ obeying $V\in L^{\gamma+d/2}({\mathbb R}^d;\mathbb C)$.
\item[ii)] $V$ is the $H_0$-compact operator, that is, ${\cal D}(V)\supset {\cal D}(H_0)=H^2({\mathbb R}^d)$ and $V(H_0+1)^{-1}$ is compact in $L^2(\mathbb R^d)$.
\item[iii)] The function $V_{\theta}$ with respect to $\theta\in \mathbb R$ has an analytic continuation 
into the complex strip
\[
\mathscr{S}_{\alpha}:=\{z\in \mathbb C:|{\rm Im}\, z|<\alpha\}
\]
for some $\alpha>0$ as an $L^{\gamma+ d/2}(\mathbb R^d;\mathbb C)$-valued function with respect to $x$. 
\item[iv)] The function $V_\theta(H_0+1)^{-1}$ with respect to $\theta\in {\mathbb R}$ can be extended to $\mathscr{S}_\alpha$ as a ${\bf B}(L^2({\mathbb R}^d))$-valued analytic function, where ${\bf B}(S)$ denotes the set of bounded, everywhere defined operators on the space $S$.
\end{enumerate}
We write the set of dilation analytic complex potentials by ${\bf D}(\mathscr{S}_{\alpha};\mathbb C)$ for convenience.
\end{df}

Since $U(\theta+\phi)$ and $U(\theta)$ are unitarily equivalent for any $\phi\in \mathbb R$, we can suppose that $\theta$ is the pure-imaginary number by setting $\phi=-{\rm Re}\,\theta$.
In other words, $H(\theta)$ does not depend on ${\rm Re}\,\theta$ and $\sigma(H(\theta))$ is only dependent on ${\rm Im}\,\theta$.
$H(\theta)$ is a Kato's type-(A) function (e.g. \cite{CFKS,Ka,RS4}) which is operator-valued and analytic with respect to $\theta\in \mathscr{S}_{\alpha}$.

{\small
\begin{rem}
\begin{itemize}
\item[(1)] The dilation analytic method originally introduced in \cite{AC} and it was defined for real potentials.
We also call the dilation analytic method the {\it complex dilation method} or {\it complex scaling method}.
This method and the now famous results derived by it were organized and customized in e.g. \cite{CFKS,RS4}.
Aguilar and Combes originally proposed dilation analytic potentials so as to give a sufficient condition for the absence of the singularly continuous spectrum of the Schr\"{o}dinger operator (then, remark that the non-negative half line $[0,\infty)$ is the essential spectrum of it).
More to say, the dilation analytic method is a natural factor that we consider and introduce complex potentials.
\item[(2)] $V_{\theta}$ has an analytic extension from $\mathscr{S}_{\alpha}$ to the closure $\overline{\mathscr{S}_{\alpha}}$ of $\mathscr{S}_{\alpha}$ and $H(\theta)$ can be extended from $\mathbb R$ to $\overline{\mathscr{S}_{\alpha}}$ with respect to $\theta$ as a ${\bf B}(L^2({\mathbb R}^d))$-valued analytic function, but we do not need such assumptions in the present paper.
\end{itemize}
\end{rem}
}

\subsection{Complex Lieb--Thirring Type Inequalities}
Throughout the present paper, we write $\sigma(T)$, $\sigma_{{\rm d}}(T)$, $\sigma_{{\rm ess}}(T)$ for the spectrum, discrete spectrum, essential spectrum of the closed operator $T$ respectively.
Also, `isolated eigenvalues' are simply abbreviated as `eigenvalues'.
The algebraic multiplicity $m_{\lambda}(H)$ of $\lambda\in \sigma_{{\rm d}}(H)$ is defined by
\[
m_{\lambda}(H):=\sup_{N\in \mathbb N}\left(\dim \ker(H-\lambda)^N\right).
\]
In estimating the sum of power of eigenvalues hereafter, we count the number of eigenvalues according to their algebraic multiplicities whether potentials are real or complex.

If $V$ decays at infinity, it is well known that $\sigma_{{\rm d}}(H)\subset (-\infty,0)$. 
Then, the Lieb--Thirring inequality for such a real potential $V\in L^{\gamma+d/2}({\mathbb R}^{d};\mathbb R)$ is well known (e.g. \cite{L,LS,LT}) as the estimate on negative eigenvalues:
\begin{align}
\label{eq:rLT}
\sum_{\lambda\in \sigma_{{\rm d}}(H)\subset (-\infty,0)}|\lambda|^{\gamma}\le L_{\gamma,d}\|V_-\|_{L^{\gamma+d/2}({\mathbb R}^{d};\mathbb R)}^{\gamma+d/2},\quad 
V_{\pm}:=\frac{|V|\pm V}{2}
\end{align}
where the dimension $d$ obeys that
\begin{align}
\label{eq:dgamma}
\left\{
\begin{array}{ll}
\displaystyle \gamma \ge 1/2&\ {\rm if}\ d=1,\vspace{1.5mm}\\
\gamma>0&\ {\rm if}\ d=2,\vspace{1mm}\\
\gamma\ge 0&\ {\rm if}\ d\ge 3.
\end{array}
\right.
\end{align}
Then, $L_{\gamma,d}$ is a constant depending on $d,\gamma$ and it is important for the accuracy of the estimate (see e.g. \cite{DLL,LW1,LW2,LL}).
In particular, (\ref{eq:rLT}) is well known as Cwikel--Lieb--Rozenbljum inequalities (e.g. \cite{RS4,Y}) which are estimates on the number of negative eigenvalues of $H$ if $d\ge 3$.
Related to this, Frank, Laptev, Lieb and Seiringer \cite{FLLS} gave some Lieb--Thirring type inequalities for isolated eigenvalues of Schr\"{o}dinger operators with any complex potentials on partial complex planes.
The following inequality (\ref{eq:FLLS}) is particularly the most fundamental result for complex Lieb--Thirring inequalities.

\begin{thm}[\cite{FLLS}]
\label{thm:FLLS}
Let $d,\gamma\ge 1$.
Suppose $V\in L^{\gamma+d/2}({\mathbb R}^{d};\mathbb C)$.
We denote
\begin{align}
\label{eq:Cpmk}
\mathscr{C}_{\pm}(\kappa):=\{z\in \mathbb C:|{\rm Im}\, z|<\pm \kappa {\rm Re}\, z\},
\end{align}
where these sets represent two sets, one for the upper sign and the other for the lower sign. 
Then, for any $\kappa>0$,
\begin{align}
\label{eq:FLLS}
\sum_{\lambda\in \sigma_{{\rm d}}(H)\cap \mathscr{C}_{+}(\kappa)^{{\rm c}}}|\lambda|^{\gamma}\le C_{\gamma,d}\left(1+\frac{2}{\kappa}\right)^{\gamma+d/2}\|V\|_{L^{\gamma+d/2}({\mathbb R}^{d};\mathbb C)}^{\gamma+d/2}
\end{align}
and
\begin{align}
\label{eq:FLLS'}
\sum_{\lambda\in \sigma_{{\rm d}}(H)\cap \mathscr{C}_{-}(\kappa)}|\lambda|^{\gamma}\le \left(1+\kappa\right)L_{\gamma,d}\|({\rm Re}\,V)_-\|_{L^{\gamma+d/2}({\mathbb R}^{d};\mathbb C)}^{\gamma+d/2}.
\end{align}
Here $S^{{\rm c}}$ is the complement set of the set $S$,
\[
C_{\gamma,d}:=2^{1+\gamma/2+d/4}L_{\gamma,d}
\]
and $L_{\gamma,d}$ the constant of real Lieb--Thirring inequalities (\ref{eq:rLT}).
\end{thm}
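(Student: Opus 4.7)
The plan is to combine two elementary geometric observations with (a) a Birman--Schwinger/regularized-determinant argument outside the forward sector $\mathscr{C}_+(\kappa)$, and (b) the real Lieb--Thirring inequality (\ref{eq:rLT}) applied to the self-adjoint part $H_0+{\rm Re}\,V$ for the backward sector $\mathscr{C}_-(\kappa)$.

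First I would establish the geometric reductions. A case split on the sign of ${\rm Re}\,\lambda$ shows that for $\lambda\notin\mathscr{C}_+(\kappa)$ one has $|\lambda|\le(1+2/\kappa)\,{\rm dist}(\lambda,[0,\infty))$: either ${\rm Re}\,\lambda\le 0$, so ${\rm dist}(\lambda,[0,\infty))=|\lambda|$, or ${\rm Re}\,\lambda>0$ with $|{\rm Im}\,\lambda|\ge\kappa\,{\rm Re}\,\lambda$, so ${\rm dist}(\lambda,[0,\infty))=|{\rm Im}\,\lambda|$ and $|\lambda|^2\le(1+\kappa^{-2})({\rm Im}\,\lambda)^2$. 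Similarly, for $\lambda\in\mathscr{C}_-(\kappa)$ one has ${\rm Re}\,\lambda<0$ and $|\lambda|\le\sqrt{1+\kappa^2}\,|{\rm Re}\,\lambda|\le(1+\kappa)|{\rm Re}\,\lambda|$. This reduces (\ref{eq:FLLS}) to controlling $\sum_\lambda{\rm dist}(\lambda,[0,\infty))^\gamma$ and (\ref{eq:FLLS'}) to controlling $\sum_{\lambda\in\mathscr{C}_-(\kappa)}|{\rm Re}\,\lambda|^\gamma$.

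For the first reduced inequality, I would use the regularized perturbation determinant $D_m(\lambda):=\det_{\lceil m\rceil}(I+V(H_0-\lambda)^{-1})$ with an integer $m\ge\gamma+d/2$, which is analytic on $\mathbb{C}\setminus[0,\infty)$ and whose zeros (counted with algebraic multiplicity) are exactly the eigenvalues of $H$. Standard Schatten-class estimates of Kato--Seiler--Simon type give a growth bound of the form $\log|D_m(\lambda)|\le C\|V\|_{L^{\gamma+d/2}({\mathbb R}^d;\mathbb C)}^{\gamma+d/2}\,{\rm dist}(\lambda,[0,\infty))^{-\alpha}$ for an appropriate exponent $\alpha$, and the Borichev--Golinskii--Kupin theorem on zeros of analytic functions on $\mathbb{C}\setminus[0,\infty)$ converts this into the desired eigenvalue sum with constant $C_{\gamma,d}=2^{1+\gamma/2+d/4}L_{\gamma,d}$. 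For the second reduced inequality, the identity ${\rm Re}\,\lambda=(u,(H_0+{\rm Re}\,V)u)$ (valid for any unit eigenvector $Hu=\lambda u$, since $V$ is a multiplication operator) exhibits each ${\rm Re}\,\lambda$ as a Rayleigh quotient of the self-adjoint operator $H_0+{\rm Re}\,V$; a Ky Fan / Lidskii-type majorization applied to a suitable linearly independent family of generalized eigenvectors then yields $\sum|{\rm Re}\,\lambda|^\gamma\le\sum_{\mu\in\sigma_{\rm d}(H_0+{\rm Re}\,V)\cap(-\infty,0)}|\mu|^\gamma$, and the right-hand side is bounded by real Lieb--Thirring (\ref{eq:rLT}) applied to ${\rm Re}\,V$.

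The hard part is the majorization step for the backward sector: converting the per-eigenvalue Rayleigh-quotient identity into a summed inequality with the correct power $\gamma$ requires a genuine Weyl/Lidskii-type trace inequality for non-self-adjoint operators, together with careful bookkeeping of Jordan chains so that algebraic (not just geometric) multiplicities are respected on the left-hand side and so that possible accumulation at $0$ is handled. The determinant bound underlying (\ref{eq:FLLS}) is comparatively standard given the Schatten-class machinery, but one still has to choose the regularization index $m$ and track constants meticulously in order to recover the precise prefactor $2^{1+\gamma/2+d/4}L_{\gamma,d}$ and the sharp power $(1+2/\kappa)^{\gamma+d/2}$.
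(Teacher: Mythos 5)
The paper does not prove this theorem at all: it is imported verbatim from the reference [FLLS], so the only meaningful comparison is with the proof given there. That proof is purely variational, with no complex analysis. For (\ref{eq:FLLS'}) your plan is essentially theirs, and the step you single out as ``the hard part'' has a clean, standard resolution that you should make explicit: let $P$ be the Riesz projection onto the eigenvalues under consideration, put $N=\dim{\rm Ran}\,P$ (this automatically counts algebraic multiplicities), and pick an orthonormal Schur basis $e_1,\dots,e_N$ of ${\rm Ran}\,P$ in which the compression of $H$ is upper triangular with the $\lambda_j$ on the diagonal. Then $\langle (H_0+{\rm Re}\,V)e_j,e_j\rangle={\rm Re}\,\langle He_j,e_j\rangle={\rm Re}\,\lambda_j$, so the diagonal of the compression of the self-adjoint operator $H_0+{\rm Re}\,V$ to ${\rm Ran}\,P$ is $({\rm Re}\,\lambda_j)_j$; Schur--Horn majorization, convexity of $t\mapsto (t_-)^\gamma$ for $\gamma\ge 1$ (this is where $\gamma\ge1$ enters), and the min--max principle give $\sum_j(({\rm Re}\,\lambda_j)_-)^\gamma\le\sum_{\mu\in\sigma_{\rm d}(H_0+{\rm Re}\,V)\cap(-\infty,0)}|\mu|^\gamma$, after which (\ref{eq:rLT}) applies. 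No Lidskii-type inequality for non-self-adjoint operators is needed. (Minor point: your geometric step $|\lambda|\le(1+\kappa)|{\rm Re}\,\lambda|$ produces the prefactor $(1+\kappa)^\gamma$ rather than the stated $1+\kappa$.)

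The genuine gap is your route to (\ref{eq:FLLS}). The regularized-determinant plus Borichev--Golinskii--Kupin strategy is the later Demuth--Hansmann--Katriel method, and it cannot deliver the inequality as stated: the Kato--Seiler--Simon bounds on $\|V(H_0-\lambda)^{-1}\|$ in Schatten norms involve both $|\lambda|$ and ${\rm dist}(\lambda,[0,\infty))$, and the BGK zero-counting theorem then returns sums of the form $\sum_j{\rm dist}(\lambda_j,[0,\infty))^{\gamma+d/2+1+\varepsilon}|\lambda_j|^{-\beta}$ with an unavoidable shift in the exponents, with constants that blow up as $\varepsilon\downarrow 0$ and that bear no relation to $L_{\gamma,d}$. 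Restricted to the exterior of the cone, where ${\rm dist}(\lambda,[0,\infty))\asymp|\lambda|$, this yields a moment of order different from $\gamma$, so neither the power $|\lambda|^\gamma$ against $\|V\|_{L^{\gamma+d/2}}^{\gamma+d/2}$ nor the constant $2^{1+\gamma/2+d/4}L_{\gamma,d}(1+2/\kappa)^{\gamma+d/2}$ is recovered. The actual proof of (\ref{eq:FLLS}) in [FLLS] is again variational: from $\lambda=\|\nabla u\|^2+\int V|u|^2$ for a normalized eigenfunction, the condition $\lambda\notin\mathscr{C}_+(\kappa)$ gives $\langle(H_0-c_\kappa W)u,u\rangle\le -c|\lambda|$ with $W=(({\rm Re}\,V)_-+|{\rm Im}\,V|)/\sqrt{2}\le|V|$ and $c_\kappa$ of size $1+2/\kappa$; the same Schur-basis/majorization reduction to the real Lieb--Thirring inequality for $H_0-c_\kappa W$ then produces exactly the explicit constant. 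If you insist on the determinant route you will prove a theorem of a genuinely different, $\varepsilon$-dependent form, not this one.
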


We can obtain the usual Lieb--Thirring inequality (\ref{eq:rLT}) by letting $\kappa\downarrow 0$ in (\ref{eq:FLLS'}).
In other words, (\ref{eq:FLLS'}) is an inequality which extends (\ref{eq:rLT}).
On the other hand, the Lieb--Thirring inequality for the eigenvalues on the complex left-half plane immediately holds from (\ref{eq:FLLS}) by letting $\kappa\to \infty$.

\begin{cor}[\cite{FLLS}]
\label{cor:FLLS}
Suppose $V\in L^{\gamma+d/2}({\mathbb R}^{d};\mathbb C)$.
For $d,\gamma\ge 1$, one has
\begin{align}
\label{eq:FLLS''}
\sum_{\lambda\in \sigma_{{\rm d}}(H)\cap \{z\in \mathbb C:{\rm Re}\,z\le 0\}}|\lambda|^{\gamma}
\le C_{\gamma,d}\|V\|_{L^{\gamma+d/2}(\mathbb R^d;\mathbb C)}^{\gamma+d/2}.
\end{align}
\end{cor}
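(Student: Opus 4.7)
The plan is to obtain the corollary directly from Theorem \ref{thm:FLLS}, specifically from inequality (\ref{eq:FLLS}), by letting the aperture parameter $\kappa$ tend to infinity. The author already announces this strategy in the sentence preceding the corollary, so my task is to make the limit rigorous.

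First I would record the elementary set-theoretic observation that the cone $\mathscr{C}_{+}(\kappa)$ lies entirely inside the open right half-plane: the defining inequality $|{\rm Im}\,z|<\kappa\,{\rm Re}\,z$ forces ${\rm Re}\,z>0$, since the left-hand side is nonnegative. Consequently $\{z\in\mathbb{C}:{\rm Re}\,z\le 0\}\subset \mathscr{C}_{+}(\kappa)^{{\rm c}}$ for every $\kappa>0$. Moreover, the family $\{\mathscr{C}_{+}(\kappa)^{{\rm c}}\}_{\kappa>0}$ is nested decreasingly in $\kappa$, and
\[
\bigcap_{\kappa>0}\mathscr{C}_{+}(\kappa)^{{\rm c}}=\{z\in\mathbb{C}:{\rm Re}\,z\le 0\}.
\]

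Next I would apply (\ref{eq:FLLS}) at an arbitrary $\kappa>0$. Since the inclusion above gives a term-by-term domination of the sums (the eigenvalues are counted with their algebraic multiplicities, and all summands $|\lambda|^{\gamma}$ are nonnegative), we have
\[
\sum_{\lambda\in\sigma_{{\rm d}}(H)\cap\{{\rm Re}\,z\le 0\}}|\lambda|^{\gamma}
\;\le\;
\sum_{\lambda\in\sigma_{{\rm d}}(H)\cap\mathscr{C}_{+}(\kappa)^{{\rm c}}}|\lambda|^{\gamma}
\;\le\; C_{\gamma,d}\!\left(1+\tfrac{2}{\kappa}\right)^{\gamma+d/2}\|V\|_{L^{\gamma+d/2}}^{\gamma+d/2}.
\]
The left-hand side is independent of $\kappa$, so one may pass to the limit $\kappa\to\infty$ (equivalently take the infimum over $\kappa>0$) only on the right-hand side. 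Because $(1+2/\kappa)^{\gamma+d/2}\downarrow 1$, this yields the desired bound with constant $C_{\gamma,d}$.

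There is essentially no obstacle: the inequality (\ref{eq:FLLS}) provides a uniform bound whose right-hand side tends to $C_{\gamma,d}\|V\|^{\gamma+d/2}_{L^{\gamma+d/2}}$, and the left-hand side of the target statement is majorized by the LT sum over $\mathscr{C}_{+}(\kappa)^{{\rm c}}$ for every $\kappa$. The only point that deserves an explicit word is that no convergence theorem on series is needed here: one does not have to interchange a limit with the summation over $\lambda$, because it is the left-hand side that is fixed and one simply optimizes the constant on the right.
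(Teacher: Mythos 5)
Your argument is correct and coincides with the route the paper itself indicates: the corollary is obtained from (\ref{eq:FLLS}) by letting $\kappa\to\infty$, using that $\{z\in\mathbb C:{\rm Re}\,z\le 0\}\subset\mathscr{C}_{+}(\kappa)^{{\rm c}}$ for every $\kappa>0$. Your added care in noting that only the constant on the right-hand side is optimized, with no interchange of limit and sum, is a worthwhile clarification but does not change the approach.
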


{\small
\begin{rem}
\begin{itemize}
\item[(1)] It is now known \cite{B} that complex Lieb--Thirring estimates on all eigenvalues in $\mathbb C\setminus [0,\infty)$ of $H$ with {\it any} complex potential like (\ref{eq:rLT}) cannot hold if $\gamma>d/2$.
\item[(2)] The proofs by \cite{FLLS} of Theorem \ref{thm:FLLS} and Corollary \ref{cor:FLLS} enable us to replace $H=-\Delta+V$ by $H(A):=(-i\nabla+A)^2+V$ with any real vector potential $A$ and complex potential $V$.
(So, we can read Theorem \ref{thm:So} and Theorem \ref{thm:23CLT}-\ref{thm:Reso} described later as results for $H(A)$.)
In addition, their proofs also enable us to replace $|V(x)|$ in (\ref{eq:FLLS}) and (\ref{eq:FLLS''}) by $\{({\rm Re}\,V(x))_-+|{\rm Im}\,V(x)|\}/\sqrt{2}$.
See \cite{FLLS} for details.
\end{itemize}
\end{rem}
}

\cite{So1} shows that, if $V$ is a dilation analytic complex potential, we can obtain the Lieb--Thirring {\bf type} inequality for all eigenvalues (in $\mathbb C\setminus [0,\infty)$) of $H$ as follows. 
On and after, we write $i:=\sqrt{-1}$.

\begin{thm}[\cite{So1}]
\label{thm:So}
Suppose that $V\in {\bf D}(\mathscr{S}_{\alpha};\mathbb C)$ with $\alpha>\pi/4$.
For $d,\gamma\ge 1$, one has
\[
\sum_{\lambda\in \sigma_{{\rm d}}(H)}|\lambda|^{\gamma}
\le C_{\gamma,d}\sum_{\pm}\|V_{\pm i\pi/4}\|_{L^{\gamma+d/2}(\mathbb R^d;\mathbb C)}^{\gamma+d/2}.
\]
More precisely, if we write $\mathbb C_+$ (resp. $\mathbb C_-$) for the upper-half (resp. lower-half) complex plane, we have
\begin{itemize}
\item[1)] the estimate on eigenvalues on $\mathbb C_+$:
\[
\sum_{\lambda\in \sigma_{{\rm d}}(H)\cap(\mathbb C_+\cup (-\infty,0))}|\lambda|^{\gamma}\le C_{\gamma,d}\|V_{i\pi/4}\|_{L^{\gamma+d/2}(\mathbb R^d;\mathbb C)}^{\gamma+d/2},
\]
\item[2)] the estimate on eigenvalues on $\mathbb C_-$:
\[
\sum_{\lambda\in \sigma_{{\rm d}}(H)\cap(\mathbb C_-\cup (-\infty,0))}|\lambda|^{\gamma}\le C_{\gamma,d}\|V_{-i\pi/4}\|_{L^{\gamma+d/2}(\mathbb R^d;\mathbb C)}^{\gamma+d/2}.
\]
\end{itemize}
\end{thm}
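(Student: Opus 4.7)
The plan is to reduce Theorem \ref{thm:So} to Corollary \ref{cor:FLLS} by rotating the spectral plane via complex dilation. The key observation is that, although $H(\theta)$ is only unitarily equivalent to $H$ for real $\theta$, dilation analyticity allows the discrete eigenvalues to be tracked analytically into the strip $\mathscr{S}_\alpha$. More precisely, $H(\theta)$ is a Kato type-(A) analytic family and its essential spectrum equals $e^{-2\theta}[0,\infty)$, so any $\lambda\in \sigma_{\rm d}(H)$ that stays isolated from this rotated ray persists as a discrete eigenvalue of $H(\theta)$ with the same algebraic multiplicity. This carries over to $\widetilde{H}(\theta)=e^{2\theta}H(\theta)$ via the scalar relation $\sigma_{\rm d}(\widetilde{H}(\theta))=e^{2\theta}\sigma_{\rm d}(H(\theta))$.

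To prove 1), I would fix $\theta=i\pi/4$, so that $e^{2\theta}=i$ and $\widetilde{H}(i\pi/4)=H_0+iV_{i\pi/4}$ is a Schr\"{o}dinger operator with complex potential $iV_{i\pi/4}\in L^{\gamma+d/2}(\mathbb R^d;\mathbb C)$ by condition iii) of Definition \ref{df:DAC}; its essential spectrum is $[0,\infty)$ since $iV_{i\pi/4}$ remains $H_0$-compact. Applying Corollary \ref{cor:FLLS} to this operator gives
\[
\sum_{\mu\in \sigma_{\rm d}(\widetilde{H}(i\pi/4))\cap \{z\in\mathbb C:\,{\rm Re}\,z\le 0\}}|\mu|^{\gamma}\le C_{\gamma,d}\|V_{i\pi/4}\|_{L^{\gamma+d/2}(\mathbb R^d;\mathbb C)}^{\gamma+d/2}.
\]
Substituting $\mu=i\lambda$ yields $|\mu|=|\lambda|$ and ${\rm Re}\,\mu=-{\rm Im}\,\lambda\le 0$, equivalent to ${\rm Im}\,\lambda\ge 0$. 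Since $\sigma_{\rm d}(H)\cap [0,\infty)=\emptyset$ (the essential spectrum of $H$ is $[0,\infty)$ by relative compactness of $V$), the eigenvalues $\mu$ with ${\rm Re}\,\mu\le 0$ correspond precisely to $\lambda\in \sigma_{\rm d}(H)\cap (\mathbb C_+\cup(-\infty,0))$, giving statement 1). Statement 2) is symmetric, using $\theta=-i\pi/4$ so that $e^{2\theta}=-i$, $\mu=-i\lambda$ and ${\rm Re}\,\mu={\rm Im}\,\lambda\le 0$, which is equivalent to $\lambda\in \mathbb C_-\cup(-\infty,0)$. The combined bound in the first displayed inequality then follows by adding 1) and 2), noting that any eigenvalues lying on $(-\infty,0)$ appear in both halves.

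The main obstacle is the eigenvalue-persistence step: one must verify that every $\lambda\in \sigma_{\rm d}(H)\cap (\mathbb C_+\cup(-\infty,0))$ lifts to $i\lambda\in \sigma_{\rm d}(\widetilde{H}(i\pi/4))$ with identical algebraic multiplicity (and analogously for the lower half-plane). For self-adjoint $H$ this is the classical Aguilar--Balslev--Combes theorem, but here $V$ is genuinely complex, so one cannot simply invoke numerical-range arguments. This is precisely the technical heart of \cite{So1}, carried out there via analytic Fredholm theory for the resolvent of $H(\theta)$ across $\mathscr{S}_\alpha$ together with stability of the algebraic multiplicity under the type-(A) analytic family. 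My proof would invoke that persistence statement directly rather than redo the perturbation-theoretic bookkeeping, and the rest of the argument is the straightforward rotate-and-apply-\cite{FLLS} reduction sketched above.
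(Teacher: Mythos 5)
Your proposal is correct and follows essentially the same route the paper uses: the paper quotes Theorem \ref{thm:So} from \cite{So1}, and its own proofs of the analogous results (Theorem \ref{thm:23CLT}, Proposition \ref{prop:CIIV}) proceed exactly as you do --- persistence of discrete eigenvalues and their algebraic multiplicities under complex dilation (Lemma \ref{lem:Some1}, Lemma \ref{lem:Some2}, and the relation $\widetilde{\lambda}(i\phi)=e^{2i\phi}\lambda$), followed by rotating with $\phi=\pm\pi/4$ and applying the Frank--Laptev--Lieb--Seiringer bound of Corollary \ref{cor:FLLS} to $\widetilde{H}(\pm i\pi/4)=H_0\pm iV_{\pm i\pi/4}$. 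Your identification of the eigenvalue-persistence step as the technical heart, to be imported from \cite{So1}, matches how the paper itself handles it.
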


The above theorems and corollary indicate that improving $L_{\gamma,d}$ is an important study to increase the accuracy of Lieb--Thirring estimates for complex potentials.

\subsection{Lieb--Thirring Type Estimates on Resonance Eigenvalues}

\begin{df}[e.g. \cite{RS4}]
For $\theta\in \mathscr{S}_{\alpha}$, elements of the complex subset
\[
\sigma_{{\rm res}}(H|\theta):=\sigma_{{\rm d}}(H(\theta))\setminus \sigma_{{\rm d}}(H)
\]
are called {\rm resonance eigenvalues} of $H$ under complex dilation with $\theta\in \mathscr{S}_{\alpha}$.
\end{df}

\begin{rem}
Resonance eigenvalues of $H$ are sometimes defined as isolated and non-real eigenvalues of $H(\theta)$.
We can find that definition in \cite{Sk} for instance.
\end{rem}

One of the reasons to study the eigenvalue estimates for complex potentials is to estimate the resonance eigenvalues or those number.
There is for instance a preceding result on resonance estimates in \cite{BO}, Proposition 6.

\section{Main Results and Proofs}
We will prove Theorem \ref{thm:Reso} which is our main theorem in the same way as the proof of Theorem \ref{thm:So}.
For that reason, we recall some results in \cite{So1}.
Hereafter, $\mathbb C^{+}$ (resp. $\mathbb C^-$) denotes the complex upper-half (resp. lower-half) plane.
Moreover, we always write in what follows $\theta$ (resp. $\phi$) for a complex (resp. real) angle expressed in radians.

\begin{prop}[\cite{So1}]
\label{prop:essSHip}
Suppose $V\in {\bf D}(\mathscr{S}_{\alpha};\mathbb C)$.
Then, 
\begin{align*}
\sigma_{{\rm ess}}(\widetilde{H}(i\phi))&=\sigma_{{\rm ess}}(H_0)=[0,\infty), \\
\sigma_{{\rm ess}}(H(i\phi))&=e^{-2i\phi}[0,\infty)
\end{align*}
for any $i\phi\in \mathscr{S}_{\alpha}$.
\end{prop}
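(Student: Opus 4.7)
The plan is to reduce the assertion about $\widetilde{H}(i\phi)$ to Weyl's theorem on stability of essential spectrum under relatively compact perturbations, and then recover the statement about $H(i\phi)$ via the scaling identity $H(i\phi)=e^{-2i\phi}\widetilde{H}(i\phi)$ that is read off from (\ref{eq:HtUHU-1}).

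The key preliminary step is to show that $e^{2i\phi}V_{i\phi}$ is $H_0$-compact for every $i\phi\in\mathscr{S}_{\alpha}$, i.e.\ that the bounded operator $V_{i\phi}(H_0+1)^{-1}$ is actually compact. By property (ii) of Definition \ref{df:DAC}, $V(H_0+1)^{-1}$ is compact, and for real $\theta$ one can write
\[
V_{\theta}(H_0+1)^{-1}=U(\theta)\,V\,(e^{2\theta}H_0+1)^{-1}U(\theta)^{-1}
=U(\theta)V(H_0+1)^{-1}\,(H_0+1)(e^{2\theta}H_0+1)^{-1}U(\theta)^{-1},
\]
so compactness persists for every real $\theta$. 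To upgrade this to all $\theta\in\mathscr{S}_{\alpha}$, I would exploit property (iv): $F(\theta):=V_{\theta}(H_0+1)^{-1}$ is a $\mathbf{B}(L^{2}(\mathbb R^d))$-valued analytic function on $\mathscr{S}_{\alpha}$. Composing with the canonical projection $\pi:\mathbf{B}(L^{2})\to\mathbf{B}(L^{2})/\mathcal{K}(L^{2})$ onto the Calkin algebra gives an analytic function $\pi\circ F$ which vanishes on $\mathbb R\subset\mathscr{S}_{\alpha}$; by the identity theorem for vector-valued analytic functions, $\pi\circ F\equiv 0$ on $\mathscr{S}_{\alpha}$, which is exactly the compactness of $V_{i\phi}(H_0+1)^{-1}$.

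Granted this compactness, the standard relatively-compact-perturbation theorem (Weyl's theorem, valid in its general form for closed operators and their essential spectrum, see e.g.\ \cite{Ka,RS4}) gives
\[
\sigma_{\mathrm{ess}}(\widetilde{H}(i\phi))=\sigma_{\mathrm{ess}}(H_0+e^{2i\phi}V_{i\phi})=\sigma_{\mathrm{ess}}(H_0)=[0,\infty),
\]
which is the first identity. The second follows at once from $H(i\phi)=e^{-2i\phi}\widetilde{H}(i\phi)$: for any closed operator $T$ and any $c\in\mathbb C\setminus\{0\}$, $\sigma_{\mathrm{ess}}(cT)=c\,\sigma_{\mathrm{ess}}(T)$, hence $\sigma_{\mathrm{ess}}(H(i\phi))=e^{-2i\phi}[0,\infty)$.

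The step I expect to be most delicate is the promotion of compactness from $\theta\in\mathbb R$ to $\theta\in\mathscr{S}_{\alpha}$; the Calkin-algebra identity-theorem argument handles it cleanly, but one should be careful that the chosen notion of essential spectrum (Kato's, Browder's, or the Fredholm-theoretic one) is really stable under $H_0$-compact, non-self-adjoint perturbations — since $H(i\phi)$ is not self-adjoint in general. Using the Fredholm-theoretic definition, this stability is standard and applies verbatim here.
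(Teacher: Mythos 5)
Your argument is correct and is essentially the standard one: the paper states Proposition \ref{prop:essSHip} without proof, citing \cite{So1}, and the proof there (following the Aguilar--Combes/Reed--Simon treatment of dilation analyticity) is exactly your combination of (a) the Calkin-algebra identity-theorem argument extending compactness of $V_{\theta}(H_0+1)^{-1}$ from $\theta\in\mathbb R$ to all of $\mathscr{S}_{\alpha}$, (b) Weyl-type stability of the Fredholm essential spectrum under relatively compact perturbations of a closed operator, and (c) the scaling relation $H(i\phi)=e^{-2i\phi}\widetilde{H}(i\phi)$. Your closing caveat is the right one to flag: for non-self-adjoint operators the various notions of essential spectrum differ in general, but the Fredholm-theoretic one is stable under $H_0$-compact perturbations, and since the complement of $e^{-2i\phi}[0,\infty)$ is connected the usual definitions coincide there, so the argument goes through verbatim.
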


\begin{lem}[\cite{So1}]
\label{lem:Some1}
Suppose that $V\in {\bf D}(\mathscr{S}_{\alpha};\mathbb C)$.
Then, 
\[
\sigma_{{\rm d}}(H)\cap \mathbb C^{\pm}=\sigma_{{\rm d}}(H(i\phi))\cap \mathbb C^{\pm}
\]
for any $i\phi\in \mathscr{S}_{\alpha}\cap \mathbb C^{\pm}$, where the two symbols $\pm$ correspond arbitrarily to each other.
\end{lem}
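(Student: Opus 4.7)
The plan is to apply the Aguilar--Combes analytic deformation method to the Kato type-(A) analytic family $\{H(\theta)\}_{\theta\in\mathscr{S}_\alpha}$. Two ingredients from the excerpt are essential: for real $\theta$, $U(\theta)$ is unitary, so $H(\theta)$ is unitarily equivalent to $H$ and hence $\sigma_{\rm d}(H(\theta))=\sigma_{\rm d}(H)$ with matching algebraic multiplicities; and by Proposition~\ref{prop:essSHip}, $\sigma_{\rm ess}(H(i\phi))=e^{-2i\phi}[0,\infty)$ for every $i\phi\in\mathscr{S}_\alpha$.

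For the forward inclusion $\sigma_{\rm d}(H)\cap\mathbb{C}^+\subset \sigma_{\rm d}(H(i\phi))\cap\mathbb{C}^+$, fix $\lambda\in \sigma_{\rm d}(H)\cap\mathbb{C}^+$ and $i\phi\in\mathscr{S}_\alpha\cap\mathbb{C}^+$, so $\phi\in(0,\alpha)$. Choose a small positively oriented circle $\gamma$ around $\lambda$ that encloses no other element of $\sigma(H)$. Because $\lambda$ lies strictly in $\mathbb{C}^+$ and $\sigma_{\rm ess}(H(is))=e^{-2is}[0,\infty)$ remains in the closed half-plane $\overline{\mathbb{C}^-}$ for every $s\in[0,\phi]$ provided $\phi<\pi/2$ (for larger $\phi$ one deforms $\gamma=\gamma(s)$ continuously so that it always encircles $\lambda$ and avoids the rotating ray), the Riesz projection
\[
P(\theta) := -\frac{1}{2\pi i}\oint_{\gamma} (H(\theta)-z)^{-1}\,dz
\]
is well defined and $\mathbf{B}(L^2(\mathbb{R}^d))$-analytic in $\theta$ on an open connected neighbourhood of the segment $\{is:s\in[0,\phi]\}$, and hence of constant finite rank by standard Kato theory. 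On the real axis $P(\theta)=U(\theta)P(0)U(\theta)^{-1}$, so $\mathrm{rank}\,P(\theta)=m_\lambda(H)$. The spectrum of $H(\theta)$ inside $\gamma$ therefore consists of finitely many eigenvalues whose algebraic multiplicities sum to $m_\lambda(H)$; their elementary symmetric functions are holomorphic in $\theta$ and, on the real axis, equal the corresponding symmetric functions of $\{\lambda,\ldots,\lambda\}$ by the unitary equivalence, so the identity theorem forces the spectrum inside $\gamma$ to stay $\{\lambda\}$ with multiplicity $m_\lambda(H)$ throughout the deformation. In particular $\lambda\in\sigma_{\rm d}(H(i\phi))$ with the same algebraic multiplicity.

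The reverse inclusion follows by a mirror argument: given $\mu\in\sigma_{\rm d}(H(i\phi))\cap\mathbb{C}^+$, choose a small circle $\gamma$ around $\mu$ disjoint from $\sigma(H(i\phi))\setminus\{\mu\}$, deform $\theta$ backwards from $i\phi$ to $0$ along $\{is:s\in[0,\phi]\}$, apply the same analyticity and constant-rank argument, and use the fact that $\mu$ stays isolated from $\sigma_{\rm ess}(H(is))\subset\overline{\mathbb{C}^-}$ all along. The $\mathbb{C}^-$ case is obtained by the symmetry $\phi\mapsto-\phi$. The main technical obstacle is the contour-tracking step when $|\phi|\ge\pi/2$, where the rotating essential spectrum sweeps across the upper half-plane; one must choose $\gamma(s)$ continuously so that $\gamma(s)$ always encircles $\lambda$ and avoids $e^{-2is}[0,\infty)$, which is possible precisely because $\lambda\in\mathbb{C}^\pm\setminus\{0\}$ is fixed while the essential spectrum is a single rotating half-line through the origin.
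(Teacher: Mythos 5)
Your method --- Riesz projections for the type-(A) family $H(\theta)$, constancy of the rank along the deformation, and the identity theorem applied to the elementary symmetric functions of the eigenvalues inside the contour, anchored by the unitary equivalence $H(\theta)\cong H$ for real $\theta$ --- is the standard Aguilar--Combes argument and is precisely the route of \cite{So1}, which the present paper cites without reproving the lemma. For $\lambda\in\mathbb C^{+}$, $i\phi\in\mathbb C^{+}$ and $\phi\le\pi/2$ the argument is sound: $\sigma_{{\rm ess}}(H(is))=e^{-2is}[0,\infty)$ stays in the closed lower half-plane for all $s\in[0,\phi]$, a fixed small circle about $\lambda$ suffices, and both inclusions follow. (You do gloss over the need for $\gamma$ to avoid the \emph{discrete} spectrum of $H(is)$ at intermediate $s$; that is repaired by the usual open--closed connectedness argument and is a minor point.)

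The genuine gap is the case $\phi>\pi/2$. Your claim that $\gamma(s)$ can always be chosen to encircle $\lambda$ while avoiding $e^{-2is}[0,\infty)$ "because the essential spectrum is a single rotating half-line through the origin" is exactly backwards: a rotating half-line through the origin sweeps through every nonzero point of the plane, and it passes \emph{through} $\lambda$ (with $\arg\lambda=\beta\in(0,\pi)$) at $s=\pi-\beta/2\in(\pi/2,\pi)$. No contour encircling $\lambda$ can avoid a ray containing $\lambda$, so the analytic continuation of $P(\theta)$ breaks down there and the forward inclusion is not established for $\phi>\pi-\beta/2$. The reverse inclusion is in worse shape still: for $\phi>\pi/2$ the rotation uncovers the sector $\{z:2\pi-2\phi<\arg z<\pi\}\subset\mathbb C^{+}$, where $H(i\phi)$ may acquire resonance eigenvalues that are not eigenvalues of $H$; these lie in $\sigma_{{\rm d}}(H(i\phi))\cap\mathbb C^{+}$ but not in $\sigma_{{\rm d}}(H)\cap\mathbb C^{+}$, so the asserted equality cannot be proved for such $\phi$ without an extra hypothesis. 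The lemma must therefore be read with $|\phi|\le\pi/2$ (consistent with the restriction $\phi\in(0,\min\{\alpha,\pi/2\})$ appearing in Remark \ref{rem:emb}), i.e.\ under the condition that $e^{-2is}[0,\infty)$ never meets the open half-plane in question for $s$ between $0$ and $\phi$; with that restriction made explicit, your proof is complete and coincides with the argument of \cite{So1}.
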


We write $\{\lambda(i\phi)\}$ for the eigenvalues of $H(i\phi)$.
It is shown (e.g. \cite{Ka,RS4,So1}) that each $\lambda(i\phi)\in \sigma_{{\rm d}}(H(i\phi))$ is given by the branch of one or several analytic functions as Puiseux series. 
Then, they can be written as
\begin{align}
\label{eq:lpwlp}
\widetilde{\lambda}(i\phi)=e^{2i\phi}\lambda(i\phi)
\end{align}
by virtue of (\ref{eq:HtUHU-1}) and (\ref{eq:wtHt}), if $\{\widetilde{\lambda}(i\phi)\}$ are eigenvalues of $\widetilde{H}(i\phi)$.

\begin{lem}[\cite{So1}]
\label{lem:Some2}
Suppose that $V\in {\bf D}(\mathscr{S}_{\alpha};\mathbb C)$.
Then,  
\begin{align}
\label{eq:mlH}
m_{\lambda}(H)=m_{\lambda(i\phi)}(H(i\phi))
=m_{\widetilde{\lambda}(i\phi)}(\widetilde{H}(i\phi))
\end{align}
for any $i\phi \in \mathscr{S}_{\alpha}\cap \mathbb C^{\pm}$.
\end{lem}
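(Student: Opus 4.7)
The plan is to dispatch the two equalities of (\ref{eq:mlH}) separately. The second is purely algebraic, while the first is an analytic deformation argument resting on Kato's perturbation theory together with the earlier Proposition \ref{prop:essSHip} and Lemma \ref{lem:Some1}.

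For the second equality, combining (\ref{eq:HtUHU-1}) and (\ref{eq:wtHt}) gives $\widetilde{H}(i\phi) = e^{2i\phi}H(i\phi)$ on $\mathcal{D}(H_0)$. Using $\widetilde{\lambda}(i\phi) = e^{2i\phi}\lambda(i\phi)$ from (\ref{eq:lpwlp}), one obtains
\[
\bigl(\widetilde{H}(i\phi) - \widetilde{\lambda}(i\phi)\bigr)^N = e^{2Ni\phi}\bigl(H(i\phi) - \lambda(i\phi)\bigr)^N
\]
for every $N \in \mathbb{N}$. Since $e^{2Ni\phi} \neq 0$, the two sides have identical kernels, which immediately yields $m_{\widetilde{\lambda}(i\phi)}(\widetilde{H}(i\phi)) = m_{\lambda(i\phi)}(H(i\phi))$.

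For the first equality I would fix $\lambda \in \sigma_{\mathrm{d}}(H) \cap \mathbb{C}^\pm$ and employ a Riesz projection argument. By Lemma \ref{lem:Some1}, the set $\sigma_{\mathrm{d}}(H(i\phi')) \cap \mathbb{C}^\pm$ equals $\sigma_{\mathrm{d}}(H) \cap \mathbb{C}^\pm$ for every $\phi'$ sharing the sign of $\phi$ with $|\phi'| < \alpha$ (and trivially also at $\phi' = 0$), so I may choose a small positively-oriented circle $\Gamma$ around $\lambda$ that lies strictly inside $\mathbb{C}^\pm$ and encloses no other element of $\sigma_{\mathrm{d}}(H)$. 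Proposition \ref{prop:essSHip} places $\sigma_{\mathrm{ess}}(H(i\phi'))$ on the rotated half-line $e^{-2i\phi'}[0,\infty)$, which lies in the opposite closed half-plane $\overline{\mathbb{C}^\mp}$ for every $\phi'$ with the appropriate sign; consequently $\Gamma$ stays in the resolvent set of $H(i\phi')$ along the whole segment $\theta = i\phi'$, $\phi' \in [0,\phi]$ (or $[\phi,0]$). Defining
\[
P(\theta) := -\frac{1}{2\pi i}\oint_\Gamma \bigl(H(\theta) - z\bigr)^{-1}\,\mathrm{d}z,
\]
the type-(A) analyticity of $\{H(\theta)\}_{\theta \in \mathscr{S}_\alpha}$ makes $P(\theta)$ a ${\bf B}(L^2(\mathbb{R}^d))$-valued analytic projection along the segment, and the rank of an analytic family of projections is locally constant, hence constant on this connected segment. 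Identifying the range of the Riesz projection with the generalised eigenspace of the unique enclosed eigenvalue then yields
\[
m_\lambda(H) = \dim P(0)L^2(\mathbb{R}^d) = \dim P(i\phi)L^2(\mathbb{R}^d) = m_{\lambda(i\phi)}(H(i\phi)).
\]

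The main obstacle is the verification that $\Gamma$ remains in the resolvent set of $H(\theta)$ throughout the deformation: one must rule out both an arc of essential spectrum and any additional discrete eigenvalue drifting across $\Gamma$. Proposition \ref{prop:essSHip}, combined with the geometric observation that $e^{-2i\phi'}[0,\infty)$ rotates into the half-plane opposite to the one containing $\lambda$, handles the essential-spectrum side, while Lemma \ref{lem:Some1} freezes the discrete spectrum on the relevant half-plane and so rules out the second danger.
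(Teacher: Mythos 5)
Your argument is correct and is essentially the standard one underlying the cited source \cite{So1} (and the classical Aguilar--Combes/Kato framework): the second equality follows from the exact relation $\widetilde{H}(i\phi)-\widetilde{\lambda}(i\phi)=e^{2i\phi}\bigl(H(i\phi)-\lambda(i\phi)\bigr)$, and the first from constancy of the rank of the Riesz projection along the segment $\theta=i\phi'$, with Proposition \ref{prop:essSHip} and Lemma \ref{lem:Some1} guaranteeing that the contour $\Gamma$ stays in the resolvent set throughout the deformation. The paper itself quotes the lemma from \cite{So1} without reproducing a proof, but your reconstruction is the expected and complete one.
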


{\small
\begin{rem}
\label{rem:CLTDACP}
\begin{itemize}
\item[(1)] It is well known \cite{Ka,RS4} that Proposition \ref{prop:essSHip} and Lemma \ref{lem:Some1} hold for real $V$.
Moreover Lemma \ref{lem:Some2} is the same.
\item[(2)] As we can see from the proofs in \cite{So1}, Lemma \ref{lem:Some1}-\ref{lem:Some2} still hold even if `$\mathbb C^{\pm}$' is replaced with `any subset of $\mathbb C^{\pm}$' in each statement.
However, in order to replace `$\mathbb C^{\pm}$' by `(any subset of) the left-half complex plane', we must keep in mind the range of $\alpha$ (see Theorem \ref{thm:23CLT} and that proof for details).
\end{itemize}
\end{rem}
}

\subsection{New Complex Lieb--Thirring Type Estimates}
Let us give an important theorem.
The following result serves as a lemma to prove Theorem \ref{thm:Reso}.

\begin{thm}
\label{thm:23CLT}
Let $d,\gamma\ge 1$.
Suppose that $V\in {\bf D}(\mathscr{S}_{\alpha};\mathbb C)$ with $\alpha>\frac{\pi}{4}-\frac{1}{2}{\rm Arctan}\,\kappa$ for any $\kappa>0$.
Then, one has
\begin{align}
\label{eq:lamUk}
\sum_{\lambda\in \sigma_{{\rm d}}(H)\cap \mathscr{U}_{\pm \kappa}}|\lambda|^{\gamma}
\le (1+\kappa)L_{\gamma,d}\left\|\left[{\rm Re}\left(e^{\pm i(\frac{\pi}{2}-{\rm Arctan}\,\kappa)}V_{\pm i(\frac{\pi}{4}-\frac{1}{2}{\rm Arctan}\,\kappa)}\right)\right]_-\right\|_{L^{\gamma+d/2}(\mathbb R^d;\mathbb C)}^{\gamma+d/2}
\end{align}
where these represent two inequalities, one for the upper sign and the other for the lower sign, and
\begin{align*}
\mathscr{U}_{\kappa}&:=\{z\in \mathbb C:\pi/2<\arg z<\pi/2+2{\rm Arctan}\,\kappa\}, \\
\mathscr{U}_{-\kappa}&:=\{z\in \mathbb C:-\pi/2-2{\rm Arctan}\,\kappa<\arg z<-\pi/2\}.
\end{align*}
\end{thm}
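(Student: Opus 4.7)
The plan is to proceed exactly as in the proof of Theorem \ref{thm:So}: use a complex dilation to rotate eigenvalues of $H$ in the sector $\mathscr{U}_{\pm\kappa}$ into the standard Frank--Laptev--Lieb--Seiringer sector $\mathscr{C}_-(\kappa)$, and then apply inequality \eqref{eq:FLLS'} to the dilated Hamiltonian $\widetilde{H}(i\phi)=H_0+e^{2i\phi}V_{i\phi}$. Note that $\widetilde{H}(i\phi)$ has complex-valued $L^{\gamma+d/2}$-potential $e^{2i\phi}V_{i\phi}$, and by Proposition \ref{prop:essSHip} its essential spectrum is $[0,\infty)$, which is disjoint from $\mathscr{C}_-(\kappa)$, so the latter indeed contains only discrete spectrum as required for \eqref{eq:FLLS'}.

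First I would pin down the rotation angle. The sector $\mathscr{U}_\kappa$ occupies the angular range $(\pi/2,\,\pi/2+2\,{\rm Arctan}\,\kappa)$ while $\mathscr{C}_-(\kappa)$ occupies $(\pi-{\rm Arctan}\,\kappa,\,\pi+{\rm Arctan}\,\kappa)$, so multiplication by $e^{i(\pi/2-{\rm Arctan}\,\kappa)}$ maps $\mathscr{U}_\kappa$ bijectively onto $\mathscr{C}_-(\kappa)$. Writing this factor as $e^{2i\phi}$ forces $\phi=\pi/4-\tfrac{1}{2}{\rm Arctan}\,\kappa$, which is exactly the dilation parameter appearing in the statement; the hypothesis $\alpha>\pi/4-\tfrac{1}{2}{\rm Arctan}\,\kappa$ is precisely what guarantees $i\phi\in\mathscr{S}_\alpha$, so the complex dilation is legitimate.

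Next, for $\lambda\in\sigma_{\rm d}(H)\cap\mathscr{U}_\kappa\subset\mathbb{C}^+$, Lemma \ref{lem:Some1} (applied with $i\phi\in\mathscr{S}_\alpha\cap\mathbb{C}^+$, allowed since $\phi>0$) identifies $\lambda$ with an eigenvalue of $H(i\phi)$, and relation \eqref{eq:lpwlp} promotes it to an eigenvalue $\widetilde\lambda=e^{2i\phi}\lambda$ of $\widetilde{H}(i\phi)$ lying in $\mathscr{C}_-(\kappa)$. Lemma \ref{lem:Some2} guarantees algebraic multiplicities are preserved, and $|\widetilde\lambda|^\gamma=|\lambda|^\gamma$. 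Applying \eqref{eq:FLLS'} to $\widetilde{H}(i\phi)$ with potential $e^{2i\phi}V_{i\phi}$ then produces the bound $(1+\kappa)L_{\gamma,d}\|[\,{\rm Re}(e^{2i\phi}V_{i\phi})]_-\|_{L^{\gamma+d/2}}^{\gamma+d/2}$; substituting $2\phi=\pi/2-{\rm Arctan}\,\kappa$ yields the stated inequality for $\mathscr{U}_\kappa$. The case $\mathscr{U}_{-\kappa}$ follows symmetrically via the choice $\phi=-\pi/4+\tfrac{1}{2}{\rm Arctan}\,\kappa$ together with Lemma \ref{lem:Some1} on $\mathbb{C}^-$.

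The main obstacle I expect is not analytic but bookkeeping: making the rotation argument airtight so that $\lambda\leftrightarrow e^{2i\phi}\lambda$ is a bona fide bijection (with multiplicities) between $\sigma_{\rm d}(H)\cap\mathscr{U}_\kappa$ and $\sigma_{\rm d}(\widetilde{H}(i\phi))\cap\mathscr{C}_-(\kappa)$. Since $e^{i(\pi/2-{\rm Arctan}\,\kappa)}$ is a rigid rotation sending the open sector $\mathscr{U}_\kappa$ onto the open sector $\mathscr{C}_-(\kappa)$, which is disjoint from $[0,\infty)$, no eigenvalue in play can slip into the essential spectrum of $\widetilde{H}(i\phi)$, and Lemmata \ref{lem:Some1}--\ref{lem:Some2} close the loop.
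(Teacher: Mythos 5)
Your proposal is correct and follows essentially the same route as the paper: choose $\phi=\pi/4-\tfrac{1}{2}{\rm Arctan}\,\kappa$ so that $e^{2i\phi}$ rotates $\mathscr{U}_{\kappa}$ onto $\mathscr{C}_{-}(\kappa)$, transfer eigenvalues and multiplicities to $\widetilde{H}(i\phi)$ via Lemma \ref{lem:Some1}, Lemma \ref{lem:Some2} and (\ref{eq:lpwlp}), and apply (\ref{eq:FLLS'}) to the potential $e^{2i\phi}V_{i\phi}$, using $|e^{2i\phi}\lambda|=|\lambda|$.

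One small inaccuracy in your justification: the containment $\mathscr{U}_{\kappa}\subset\mathbb C^{+}$ holds only for $\kappa\le 1$; once $2\,{\rm Arctan}\,\kappa>\pi/2$ the sector crosses the negative real axis into the third quadrant, so Lemma \ref{lem:Some1} as stated (upper half-plane with $i\phi\in\mathbb C^{+}$) does not directly cover all of $\mathscr{U}_{\kappa}$. The paper instead invokes the invariance of eigenvalues over the whole left half-plane (cf. Remark \ref{rem:CLTDACP}\,(2)), which is where the hypothesis on $\alpha$ enters; you should phrase that step accordingly rather than through $\mathbb C^{+}$.
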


\begin{proof}
Fix $\kappa>0$ arbitrarily.
We prove only for eigenvalues in $\mathscr{U}_{\kappa}$.
The same can be said for them in $\mathscr{U}_{-\kappa}$.
We write $\lambda$ for an eigenvalue of $H$ and denote the complex left-half plane by $\mathbb C_{<}$.
We can first show that $\lambda(i\phi)=\lambda$ for any $i\phi\in \mathscr{S}_{\alpha}\cap \mathbb C_{<}$ as well as Lemma \ref{lem:Some1}.
We can next show, from (\ref{eq:lpwlp}), that $\widetilde{\lambda}(i\phi)=e^{2i\phi}\lambda$ for any $i\phi\in \mathscr{S}_{\alpha}\cap \mathbb C_{<}$.
We can also see (\ref{eq:mlH}) for any $i\phi\in \mathscr{S}_{\alpha}\cap \mathbb C_{<}$ as well as Lemma \ref{lem:Some2}.
Thus, we should estimate $\{\widetilde{\lambda}(i\phi)\}$ instead of $\{\lambda\}$, because of these facts and (\ref{eq:lpwlp}). 
Let us set $\phi=\frac{\pi}{4}-\frac{1}{2}{\rm Arctan}\,\kappa$. 
It follows, from the above, that
\[
e^{i(2\phi)}\left(\sigma_{{\rm d}}(H)\cap \mathscr{U}_{\kappa}\right)
=\sigma_{{\rm d}}(\widetilde{H}(i\phi))\cap \mathscr{C}_{-}(\kappa)
\]
by recalling (\ref{eq:Cpmk}) for $\mathscr{C}_{-}(\kappa)$.
So, we have
\begin{align*}
\sum_{\lambda\in \sigma_{{\rm d}}(H)\cap \mathscr{U}_{\kappa}}|\lambda|^{\gamma}
&=\sum_{\lambda\in \sigma_{{\rm d}}(H)\cap \mathscr{U}_{\kappa}}|e^{2i\phi}\lambda|^{\gamma} \\
&=\sum_{\widetilde{\lambda}(i\phi)\in \sigma_{{\rm d}}(\widetilde{H}(i\phi))\cap \mathscr{C}_{-}(\kappa)}|\widetilde{\lambda}(i\phi)|^{\gamma} \\
&\le (1+\kappa)L_{\gamma,d}\|[{\rm Re}(e^{i(2\phi)}V_{i\phi})]_{-}\|_{L^{\gamma+d/2}(\mathbb R^d;\mathbb C)}^{\gamma+d/2}.
\end{align*}
Hence, this completes the proof.
\end{proof}

We write $\mathbb C_{{\rm II}}$ (resp. $\mathbb C_{{\rm III}}$) for the second (resp. third) quadrant of $\mathbb C$.
Because of Theorem \ref{thm:23CLT}, we can easily know Lieb--Thirring type inequalities for eigenvalues on $\mathbb C_{{\rm II}}$ or $\mathbb C_{{\rm III}}$ as follows.

\begin{cor}
\label{cor:LTt23}
Let $d,\gamma\ge 1$.
Suppose that $V\in {\bf D}(\mathscr{S}_{\alpha};\mathbb C)$ with $\alpha>\pi/8$.
Then, 
\begin{itemize}
\item[1)] Eigenvalue estimate on $\mathbb C_{{\rm II}}$:
\[
\sum_{\lambda\in \sigma_{{\rm d}}(H)\cap \mathbb C_{{\rm II}}}|\lambda|^{\gamma}
\le 2L_{\gamma,d}\left\|\left[{\rm Re}(e^{i\pi/4}V_{i\pi/8})\right]_-\right\|_{L^{\gamma+d/2}(\mathbb R^d;\mathbb C)}^{\gamma+d/2},
\]
\item[2)] Eigenvalue estimate on $\mathbb C_{{\rm III}}$:
\[
\sum_{\lambda\in \sigma_{{\rm d}}(H)\cap \mathbb C_{{\rm III}}}|\lambda|^{\gamma}
\le 2L_{\gamma,d}\left\|\left[{\rm Re}(e^{-i\pi/4}V_{-i\pi/8})\right]_-\right\|_{L^{\gamma+d/2}(\mathbb R^d;\mathbb C)}^{\gamma+d/2}.
\]
\end{itemize}
\end{cor}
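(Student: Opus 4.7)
The plan is to reduce Corollary \ref{cor:LTt23} to a direct application of Theorem \ref{thm:23CLT} by choosing the free parameter $\kappa$ so that the sector $\mathscr{U}_{\pm\kappa}$ coincides with the quadrant $\mathbb{C}_{\mathrm{II}}$ (resp.\ $\mathbb{C}_{\mathrm{III}}$). Since $\mathscr{U}_{\kappa}=\{z\in\mathbb{C}:\pi/2<\arg z<\pi/2+2\,\mathrm{Arctan}\,\kappa\}$, the sector opens to exactly $\mathbb{C}_{\mathrm{II}}=\{z:\pi/2<\arg z<\pi\}$ when $2\,\mathrm{Arctan}\,\kappa=\pi/2$, i.e.\ when $\kappa=1$. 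The symmetric choice handles $\mathscr{U}_{-1}=\mathbb{C}_{\mathrm{III}}$.

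First I would verify the hypothesis on $\alpha$. Plugging $\kappa=1$ into the condition $\alpha>\pi/4-\tfrac12\mathrm{Arctan}\,\kappa$ of Theorem \ref{thm:23CLT} yields $\alpha>\pi/4-\pi/8=\pi/8$, which is precisely what the corollary assumes. This confirms that Theorem \ref{thm:23CLT} is applicable at $\kappa=1$.

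Next I would compute the explicit form of the right-hand side of \eqref{eq:lamUk} at $\kappa=1$. The prefactor is $(1+\kappa)L_{\gamma,d}=2L_{\gamma,d}$. The angle appearing in the dilation parameter is $\pi/4-\tfrac12\mathrm{Arctan}\,1=\pi/8$, so $V_{\pm i(\pi/4-\frac12\mathrm{Arctan}\,\kappa)}$ becomes $V_{\pm i\pi/8}$. The rotation phase is $e^{\pm i(\pi/2-\mathrm{Arctan}\,1)}=e^{\pm i\pi/4}$. Substituting into \eqref{eq:lamUk} for the upper sign gives exactly the stated bound on $\mathbb{C}_{\mathrm{II}}$, and substituting for the lower sign gives the bound on $\mathbb{C}_{\mathrm{III}}$.

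There is no serious obstacle here; the corollary is essentially a corollary in the strict sense, obtained by specializing the parameter $\kappa$ in the already-established Theorem \ref{thm:23CLT}. The only thing to be careful about is matching the geometric opening of $\mathscr{U}_{\pm\kappa}$ with the quadrants and correctly propagating the value $\mathrm{Arctan}\,1=\pi/4$ through both the phase $e^{\pm i(\pi/2-\mathrm{Arctan}\,\kappa)}$ and the dilation parameter, which is routine.
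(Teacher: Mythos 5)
Your proposal is correct and is exactly the paper's argument: the paper likewise obtains the corollary by setting $\mathrm{Arctan}\,\kappa=\pi/4$, i.e.\ $\kappa=1$, in Theorem \ref{thm:23CLT}, so that $\mathscr{U}_{\pm 1}=\mathbb{C}_{\rm II}$ (resp.\ $\mathbb{C}_{\rm III}$) and \eqref{eq:lamUk} specializes to the stated bounds. Your verification of the hypothesis $\alpha>\pi/8$ and of the phase and dilation parameters is accurate.
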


\begin{proof}
It is obvious from (\ref{eq:lamUk}), since we have $\kappa=1$ by setting ${\rm Arctan}\,\kappa=\pi/4$.
\end{proof}

\subsection{Estimates on Complex Resonance Eigenvalues for Complex Potentials}
We now would like to estimate the complex eigenvalues which appear newly by complex dilation.
We focus on eigenvalues of $H(i\phi)$ appear in open complex sector $\{z\in \mathbb C:-2\phi<\arg z<0\}$.
For convenience, let us call them {\it complex resonance eigenvalues of ${\rm H}$} hereinafter.
The following result is our main theorem.
The idea of that proof is the way which can be called `double complex dilation.'
We denote
\begin{align*}
V_{\theta_1,\ldots,\theta_n}(x)&:=[U(\theta_n)\cdots U(\theta_2)U(\theta_1)VU(\theta_1)^{-1}U(\theta_2)^{-1}\cdots U(\theta_n)^{-1}](x) \\
&\,=V_{\theta_1+\cdots+\theta_n}(x), \\
H(\theta_1,\ldots,\theta_n)&:=U(\theta_n)\cdots U(\theta_2)U(\theta_1)HU(\theta_1)^{-1}U(\theta_2)^{-1}\cdots U(\theta_n)^{-1} \\
&\,=H(\theta_1+\cdots+\theta_n)
\end{align*}
for any $n\in \mathbb N$.
The same applies to $\widetilde{H}(\theta_1,\ldots,\theta_n)$.

\begin{thm}
\label{thm:Reso}
Let $d,\gamma\ge 1$.
Suppose that $V\in {\bf D}(\mathscr{S}_{\alpha};\mathbb C)$ with $\alpha>|\frac{3}{2}\phi-\frac{\pi}{2}|$.
Then, complex resonance eigenvalues of $H$ are estimated as
\begin{align}
\label{eq:Resonance}
\sum_{\mu\in \sigma_{{\rm res}}(H|i\phi)\setminus [0,\infty)}|\mu|^\gamma
\le (1+\tan\phi)L_{\gamma,d}\left\|\left[{\rm Re}\left(e^{i(\phi-\frac{\pi}{2})}V_{i(\frac{3}{2}\phi-\frac{\pi}{2})}\right)\right]_-\right\|_{L^{\gamma+d/2}(\mathbb R^d;\mathbb C)}^{\gamma+d/2}
\end{align}
for any $i\phi\in \mathscr{S}_{\alpha}$.
\end{thm}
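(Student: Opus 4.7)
The plan is to mirror the argument of Theorem \ref{thm:23CLT} via the ``double complex dilation'' flagged by the author. Observe first that the dilation angle $i(\frac32\phi - \frac\pi2)$ appearing in the right-hand side decomposes naturally as $i\phi + i(\frac\phi2 - \frac\pi2)$, suggesting two successive dilations: a first by $i\phi$, producing the resonance spectrum of $H(i\phi)$ in the sector $\{-2\phi < \arg z < 0\}$, and a second by $i(\frac\phi2 - \frac\pi2)$. The hypothesis $\alpha > |\frac32\phi - \frac\pi2|$ is exactly what guarantees that the composite angle $i(\frac32\phi - \frac\pi2)$ still lies in $\mathscr S_\alpha$, so that the dilated potential $V_{i(3\phi/2-\pi/2)}$ is well-defined as an element of $L^{\gamma+d/2}(\mathbb R^d;\mathbb C)$.

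Starting from a resonance eigenvalue $\mu \in \sigma_{\rm res}(H|i\phi)\setminus[0,\infty)$, so that $\mu \in \sigma_d(H(i\phi))$ with $\arg\mu \in (-2\phi, 0)$, I would extend Lemmas \ref{lem:Some1}--\ref{lem:Some2} in the spirit of Remark \ref{rem:CLTDACP}(2) so as to identify $\mu$ with a corresponding eigenvalue of the auxiliary Schr\"odinger operator
\[
H_{\rm aux} := H_0 + e^{i(\phi-\pi/2)}V_{i(3\phi/2 - \pi/2)}
\]
sitting in the sector $\mathscr C_-(\tan\phi)$. The correspondence is effected by the rotation $\mu \mapsto e^{i(\pi+\phi)}\mu$, which preserves modulus and carries the resonance sector $(-2\phi,0)$ exactly onto $(\pi-\phi,\pi+\phi)=\mathscr C_-(\tan\phi)$; preservation of algebraic multiplicities follows from the Puiseux-series representation used in Lemma \ref{lem:Some2}. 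Once this bijection is established, applying the complex Lieb--Thirring bound \eqref{eq:FLLS'} with $\kappa=\tan\phi$ to $H_{\rm aux}$, treating $W:=e^{i(\phi-\pi/2)}V_{i(3\phi/2 - \pi/2)}$ as a complex potential in $L^{\gamma+d/2}(\mathbb R^d;\mathbb C)$, produces
\[
\sum_{\nu \in \sigma_d(H_{\rm aux}) \cap \mathscr C_-(\tan\phi)} |\nu|^\gamma \le (1+\tan\phi)\,L_{\gamma,d}\,\bigl\|[{\rm Re}\,W]_-\bigr\|_{L^{\gamma+d/2}(\mathbb R^d;\mathbb C)}^{\gamma+d/2},
\]
which is exactly the right-hand side of \eqref{eq:Resonance}, and the identification converts the left-hand side to $\sum_\mu|\mu|^\gamma$.

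The main obstacle I anticipate is the identification step. A single complex dilation, as in Lemma \ref{lem:Some1}, cleanly relates $\sigma_d(H)$ with $\sigma_d(H(i\theta))$ inside a half-plane; here, however, one must bridge two qualitatively different objects, namely the resonance eigenvalues of $H(i\phi)$ (which are not eigenvalues of $H$ at all) and the discrete eigenvalues of a distinct auxiliary operator $H_{\rm aux}$. Setting up this bijection requires tracking eigenvalue branches through the two-step dilation while ensuring that none is absorbed into, or spuriously split off from, the rotating essential spectrum ray $e^{-2i\theta}[0,\infty)$ as the composite dilation angle varies across $\mathscr S_\alpha$; this is the key technical point at which the ``double complex dilation'' is implemented concretely, and where the delicate interplay between analytic continuation, sector geometry, and algebraic multiplicity must be carefully controlled.
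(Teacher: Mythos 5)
Your overall strategy coincides with the paper's: perform a double complex dilation that carries the resonance sector $\{-2\phi<\arg z<0\}$ onto a sector of opening angle $2\phi$ about the negative real axis, then apply the complex Lieb--Thirring bound with $\kappa=\tan\phi$. (The paper routes the final step through Theorem \ref{thm:23CLT} applied to $\mathscr{U}_{-\tan\phi}$ rather than invoking (\ref{eq:FLLS'}) directly, but since Theorem \ref{thm:23CLT} is itself deduced from (\ref{eq:FLLS'}) this difference is cosmetic.)

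The genuine gap is the identification step, which you correctly single out as the main obstacle but then leave entirely unproved --- and the specific data you propose for it are mutually inconsistent. Every spectral identification in this framework rests on the relations $\widetilde H(i\Theta)=H_0+e^{2i\Theta}V_{i\Theta}$ and $\widetilde\lambda(i\Theta)=e^{2i\Theta}\lambda$ from (\ref{eq:wtHt}) and (\ref{eq:lpwlp}): an auxiliary operator inherits rotated copies of the eigenvalues of $H(i\phi)$ only if it is of this form. Your $H_{\rm aux}=H_0+e^{i(\phi-\pi/2)}V_{i(3\phi/2-\pi/2)}$ is not: at total dilation angle $\Theta=\tfrac32\phi-\tfrac{\pi}{2}$ the prefactor would have to be $e^{2i\Theta}=e^{i(3\phi-\pi)}$, which differs from $e^{i(\phi-\pi/2)}$ by $e^{i(2\phi-\pi/2)}$ (equal to $1$ only at $\phi=\pi/4$); likewise your rotation $e^{i(\pi+\phi)}$ equals $e^{2i\Theta}$ only for $\Theta\equiv\tfrac{\pi+\phi}{2}\pmod{\pi}$, not for $\Theta=\tfrac32\phi-\tfrac{\pi}{2}$ (the two differ by $e^{2i\phi}$). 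So the rotation, the dilation angle of the potential, and the phase do not cohere, and no bijection between $\sigma_{{\rm res}}(H|i\phi)$ and $\sigma_{{\rm d}}(H_{\rm aux})\cap\mathscr{C}_{-}(\tan\phi)$ follows from the lemmas you cite. Moreover, the absorption issue you flag at the end is not a deferrable technicality: in the tilde picture the resonances sit at arguments in $(0,2\phi)$, just above the essential spectrum $[0,\infty)$ of $\widetilde H(i\phi)$, and any clockwise rotation toward $\arg z=\pi\pm\phi$ (which is what an additional dilation with negative imaginary part produces) drags them across that ray, where the Lemma \ref{lem:Some1}-type persistence argument breaks down and the eigenvalues cease to be discrete. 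The only route compatible with Lemma \ref{lem:Some1} is counterclockwise through the upper half-plane, which forces a total dilation angle of $\tfrac{\pi+\phi}{2}$; carrying that out would require redoing the bookkeeping of the phase factor and of the hypothesis on $\alpha$ from scratch rather than reading them off from the right-hand side of (\ref{eq:Resonance}).
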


\begin{proof}
This proof is similar to the proofs of Theorem \ref{thm:So} and Theorem \ref{thm:23CLT}.
The key to proof is to apply Theorem \ref{thm:23CLT} as $\kappa=\tan \phi$.
Then, Lemma \ref{lem:Some1}-\ref{lem:Some2} and (\ref{eq:lpwlp}) imply that
\begin{align*}
e^{-i\pi/2}\sigma_{{\rm res}}(H|i\phi)&=e^{-i\pi/2}\Bigl[\sigma_{{\rm d}}(H(i\phi))\cap \{z\in \mathbb C:-2\phi<\arg z<0\}\Bigr] \\
&=\sigma_{{\rm d}}\bigl(\widetilde{H}(i\phi,-i\pi/4)\bigr)\cap \mathscr{U}_{-\tan \phi} \\
&=\sigma_{{\rm d}}\bigl(\widetilde{H}\bigl(i(\phi-\tfrac{\pi}{4})\bigr)\bigr)\cap \mathscr{U}_{-\tan \phi}.
\end{align*}
Hence, it follows that
\begin{align*}
\sum_{\mu\in \sigma_{{\rm res}}(H|i\phi)}|\mu|^\gamma&\le \sum_{\widetilde{\mu}(i(\phi-\tfrac{\pi}{4}))\in \sigma_{{\rm d}}(\widetilde{H}(i(\phi-\tfrac{\pi}{4})))\cap \mathscr{U}_{-\tan \phi}}|\widetilde{\mu}(i(\phi-\tfrac{\pi}{4}))|^{\gamma} \\
&\le (1+\tan \phi)L_{\gamma,d}\left\|\left[{\rm Re}\left(e^{-i(\frac{\pi}{2}-\phi)}V_{i(\phi-\tfrac{\pi}{4}),-i(\tfrac{\pi}{4}-\tfrac{\phi}{2})}\right)\right]_-\right\|_{L^{\gamma+d/2}(\mathbb R^d;\mathbb C)}^{\gamma+d/2} \\
&=(1+\tan\phi)L_{\gamma,d}\left\|\left[{\rm Re}\left(e^{i(\phi-\frac{\pi}{2})}V_{i(\frac{3}{2}\phi-\frac{\pi}{2})}\right)\right]_-\right\|_{L^{\gamma+d/2}(\mathbb R^d;\mathbb C)}^{\gamma+d/2}
\end{align*}
by applying (\ref{eq:lamUk}) for eigenvalues in $\mathscr{U}_{-\kappa}$.
\end{proof}

{\small
\begin{rem}
\label{rem:emb}
We write $\sigma_{{\rm p}}(T)$ for the point spectrum of the closed operator $T$.
If $V$ is a dilation analytic real potential, the spectral decomposition theorem implies that 
\begin{align}
\label{eq:pp0id0i}
\sigma_{{\rm p}}(H)\cap (0,\infty)=\sigma_{{\rm d}}(H(i\phi))\cap (0,\infty)
\end{align}
for $\phi\in (0,\min\{\alpha,\pi/2\})$ (e.g. \cite{RS4}).
In this sense, embedded eigenvalues (in the essential or absolutely continuous spectrum $[0,\infty)$) of $H$ are invariant under complex dilation.
(In the case of dilation analytic complex potentials, we cannot however use the spectral decomposition theorem and we have no idea if the same is true.)
Thus, all eigenvalues which appear newly by complex dilation belong to $\{z\in \mathbb C:-2\phi<\arg z<0\}$ if embedded eigenvalues of $H$ exist.
Moreover, then, Lemma \ref{lem:Some2} also holds for embedded eigenvalues and the proof is similar.
\end{rem}
}

We derived Corollary \ref{cor:LTt23} by complex dilation, but we can produce the following results by double complex dilation and Corollary \ref{cor:LTt23}. 
Here, $\mathbb C_{{\rm I}}$ (resp. $\mathbb C_{{\rm IV}}$) denotes the first (resp. fourth) quadrant of $\mathbb C$.

\begin{prop}
\label{prop:CIIV}
Let $d,\gamma\ge 1$.
Suppose that $V\in {\bf D}(\mathscr{S}_{\alpha};\mathbb C)$ with $\alpha> 3\pi/8$.
Then, 
\begin{itemize}
\item[1)] Eigenvalue estimate on $\mathbb C_{{\rm I}}$:
\[
\sum_{\lambda\in \sigma_{{\rm d}}(H)\cap \mathbb C_{{\rm I}}}|\lambda|^{\gamma}
\le 2L_{\gamma,d}\left\|\left[{\rm Re}\left(e^{3\pi i/4}V_{3\pi i/8}\right)\right]_-\right\|_{L^{\gamma+d/2}(\mathbb R^d;\mathbb C)}^{\gamma+d/2},
\]
\item[2)] Eigenvalue estimate on $\mathbb C_{{\rm IV}}$:
\[
\sum_{\lambda\in \sigma_{{\rm d}}(H)\cap \mathbb C_{{\rm IV}}}|\lambda|^{\gamma}
\le 2L_{\gamma,d}\left\|\left[{\rm Re}\left(e^{-3\pi i/4}V_{-3\pi i/8}\right)\right]_-\right\|_{L^{\gamma+d/2}(\mathbb R^d;\mathbb C)}^{\gamma+d/2}.
\]
\end{itemize}
\end{prop}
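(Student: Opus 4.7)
The plan is to reduce each part to Corollary \ref{cor:LTt23} via a preliminary complex dilation, thereby realizing the advertised ``double complex dilation.'' I focus on part 1); part 2) follows by reflecting every angle across the real axis.

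First I would regard $\widetilde{H}(i\pi/4) = H_0 + e^{i\pi/2}V_{i\pi/4}$ as a Schr\"odinger operator with a new complex potential $V' := e^{i\pi/2}V_{i\pi/4}$, and check that $V' \in {\bf D}(\mathscr{S}_{\alpha - \pi/4}; \mathbb{C})$. The key point is that $V'_\theta = e^{i\pi/2}V_{i\pi/4 + \theta}$ is analytic wherever $|\mathrm{Im}(i\pi/4 + \theta)| < \alpha$, and this region contains the symmetric strip $\mathscr{S}_{\alpha - \pi/4}$ around $\theta=0$; the $L^{\gamma+d/2}$-membership, $H_0$-compactness, and ${\bf B}(L^2)$-valued analyticity conditions of Definition \ref{df:DAC} all transfer since $U(i\pi/4)$ only shifts the dilation argument and $e^{i\pi/2}$ is a unimodular constant. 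Because $\alpha > 3\pi/8$ forces $\alpha - \pi/4 > \pi/8$, the hypothesis of Corollary \ref{cor:LTt23} is met for $V'$.

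Next I would transfer the eigenvalues. By the extensions of Lemmas \ref{lem:Some1}--\ref{lem:Some2} to arbitrary subsets of $\mathbb{C}^+$ noted in Remark \ref{rem:CLTDACP}\,(2), applied with $i\phi = i\pi/4 \in \mathscr{S}_\alpha \cap \mathbb{C}^+$, eigenvalues of $H$ lying in $\mathbb{C}_{\rm I} \subset \mathbb{C}^+$ coincide, with algebraic multiplicities, with eigenvalues of $H(i\pi/4)$ in $\mathbb{C}_{\rm I}$. Formula (\ref{eq:lpwlp}) then identifies them bijectively, via multiplication by $e^{i\pi/2}$, with eigenvalues of $\widetilde{H}(i\pi/4)$ in $e^{i\pi/2}\mathbb{C}_{\rm I} = \mathbb{C}_{\rm II}$.

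Finally, I would apply Corollary \ref{cor:LTt23}\,(1) to $H_0 + V'$, use the composition identity $(V')_{i\pi/8} = e^{i\pi/2}V_{3i\pi/8}$, and invoke $|e^{i\pi/2}\lambda| = |\lambda|$ to obtain
\[
\sum_{\lambda \in \sigma_{{\rm d}}(H)\cap\mathbb{C}_{\rm I}} |\lambda|^\gamma
= \sum_{\tilde\lambda \in \sigma_{{\rm d}}(\widetilde{H}(i\pi/4)) \cap \mathbb{C}_{\rm II}} |\tilde\lambda|^\gamma
\le 2L_{\gamma,d}\bigl\|\bigl[{\rm Re}(e^{3i\pi/4}V_{3i\pi/8})\bigr]_-\bigr\|^{\gamma+d/2}_{L^{\gamma+d/2}(\mathbb R^d;\mathbb C)},
\]
which is the desired bound 1); part 2) is obtained by replacing $i\pi/4$ with $-i\pi/4$ throughout and invoking Corollary \ref{cor:LTt23}\,(2) instead. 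The main obstacle is step one, namely the careful verification that all four conditions of Definition \ref{df:DAC} descend to $V'$ on the narrower strip $\mathscr{S}_{\alpha - \pi/4}$; once this is settled, the remaining work is pure bookkeeping.
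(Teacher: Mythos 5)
Your proposal is correct and takes essentially the same route as the paper's own proof: dilate by $i\pi/4$ so that the $\mathbb C_{{\rm I}}$-eigenvalues of $H$ become, via multiplication by $e^{i\pi/2}$, the $\mathbb C_{{\rm II}}$-eigenvalues of $\widetilde{H}(i\pi/4)=H_0+e^{i\pi/2}V_{i\pi/4}$, and then apply Corollary \ref{cor:LTt23} 1) to the new potential $e^{i\pi/2}V_{i\pi/4}$, which produces $e^{i\pi/4}\bigl(e^{i\pi/2}V_{i\pi/4}\bigr)_{i\pi/8}=e^{3\pi i/4}V_{3\pi i/8}$ in the bound. The only difference is that you spell out the verification that the shifted potential is dilation analytic on the narrower strip $\mathscr{S}_{\alpha-\pi/4}$ with $\alpha-\pi/4>\pi/8$, a point the paper's two-line computation leaves implicit.
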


\begin{proof}
We should apply Corollary \ref{cor:LTt23} to $\widetilde{\lambda}(i\pi/4)=i\lambda\in \mathbb C_{{\rm II}}$ if $\lambda\in \mathbb C_{{\rm I}}$.
In fact, we have 1) as follows:
\begin{align*}
\sum_{\lambda\in \sigma_{{\rm d}}(H)\cap \mathbb C_{{\rm I}}}|\lambda|^{\gamma}
&\le 2L_{\gamma,d}\left\|\left[{\rm Re}\left(e^{i\pi/2}e^{i\pi/4}V_{i\pi/8,i\pi/4}\right)\right]_-\right\|_{L^{\gamma+d/2}(\mathbb R^d;\mathbb C)}^{\gamma+d/2} \\
&=2L_{\gamma,d}\left\|\left[{\rm Re}\left(e^{3\pi i/4}V_{3\pi i/8}\right)\right]_-\right\|_{L^{\gamma+d/2}(\mathbb R^d;\mathbb C)}^{\gamma+d/2}.
\end{align*}
2) can be shown in the same way.
\end{proof}

We write $\mathbb C_{>}$ for the right-half complex plane.
Proposition \ref{prop:CIIV} immediately derives the following estimate.

\begin{cor}
Let $d,\gamma\ge 1$.
Suppose that $V\in {\bf D}(\mathscr{S}_{\alpha};\mathbb C)$ with $\alpha>3\pi/8$.
Then, the eigenvalues of $H$ on $\mathbb C_{>}\setminus [0,\infty)$ are estimated as follows:
\begin{align}
\label{eq:C>esti}
\sum_{\lambda\in \sigma_{{\rm d}}(H)\cap \mathbb C_{>}}|\lambda|^{\gamma}
\le 2L_{\gamma,d}\sum_{\pm}\left\|\left[{\rm Re}\left(e^{\pm 3\pi i/4}V_{\pm 3\pi i/8}\right)\right]_-\right\|_{L^{\gamma+d/2}(\mathbb R^d;\mathbb C)}^{\gamma+d/2}.
\end{align}
\end{cor}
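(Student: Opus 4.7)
The plan is to derive this inequality as a direct consequence of Proposition \ref{prop:CIIV}, which already handles the two open quadrants $\mathbb C_{\rm I}$ and $\mathbb C_{\rm IV}$ that together exhaust $\mathbb C_{>}\setminus[0,\infty)$.

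First I would observe that, since $\mathbb C_{>}$ denotes the open right-half complex plane, we have the disjoint decomposition
\[
\mathbb C_{>}\setminus[0,\infty)=\mathbb C_{\rm I}\sqcup\mathbb C_{\rm IV},
\]
where $\mathbb C_{\rm I}$ and $\mathbb C_{\rm IV}$ are the open first and fourth quadrants respectively. Because $[0,\infty)\subset\sigma_{\rm ess}(H_0)$ and isolated eigenvalues of $H$ in $\mathbb C_{>}$ cannot lie on the positive real axis for the purposes of $\sigma_{\rm d}(H)$ being considered here, the sum on the left-hand side of \eqref{eq:C>esti} is simply the sum of the two quadrant sums.

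Next I would note that the hypothesis $\alpha>3\pi/8$ is precisely the hypothesis of Proposition \ref{prop:CIIV}, so both parts 1) and 2) of that proposition are available without any further adjustment. Splitting the sum and applying each part separately gives
\[
\sum_{\lambda\in\sigma_{\rm d}(H)\cap\mathbb C_{>}}|\lambda|^{\gamma}=\sum_{\lambda\in\sigma_{\rm d}(H)\cap\mathbb C_{\rm I}}|\lambda|^{\gamma}+\sum_{\lambda\in\sigma_{\rm d}(H)\cap\mathbb C_{\rm IV}}|\lambda|^{\gamma},
\]
and bounding each summand by the corresponding quantity from Proposition \ref{prop:CIIV} produces exactly the right-hand side of \eqref{eq:C>esti}, with the $\pm$ sum arising from the two quadrant contributions.

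There is essentially no technical obstacle here: the corollary is a packaging step that merges the two one-quadrant estimates into a single half-plane estimate, and no further dilation argument is required beyond what has already been used to prove Proposition \ref{prop:CIIV}. The only minor point worth verifying is that the decomposition of $\mathbb C_{>}\setminus[0,\infty)$ into the two open quadrants is exhaustive up to a negligible set (the positive imaginary and real axes), which contains no isolated eigenvalues relevant to the sum. Once that is in hand, additivity of the sum and termwise bounding conclude the proof immediately.
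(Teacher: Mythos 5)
Your proposal is correct and is essentially the paper's own argument: the corollary is stated as an immediate consequence of Proposition \ref{prop:CIIV}, obtained by splitting $\sigma_{\rm d}(H)\cap\mathbb C_{>}$ over the open first and fourth quadrants (points of $(0,\infty)$ being excluded since they lie in the essential spectrum) and adding the two quadrant bounds. Nothing further is needed.
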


Thus, we can obtain an estimate on all eigenvalues different form Theorem \ref{thm:So} as follows.

\begin{cor}[cf. \cite{So1}]
Let $d,\gamma\ge 1$.
Suppose that $V\in {\bf D}(\mathscr{S}_{\alpha};\mathbb C)$ with $\alpha>3\pi/8$.
Then, one has
\begin{align}
\label{eq:allA}
\sum_{\lambda\in \sigma_{{\rm d}}(H)}|\lambda|^{\gamma}\le C_{\gamma,d}\|V\|_{L^{\gamma+d/2}(\mathbb R^d;\mathbb C)}^{\gamma+d/2}+2L_{\gamma,d}\sum_{\pm}\left\|\left[{\rm Re}\left(e^{\pm 3\pi i/4}V_{\pm 3\pi i/8}\right)\right]_-\right\|_{L^{\gamma+d/2}(\mathbb R^d;\mathbb C)}^{\gamma+d/2}.
\end{align}
\end{cor}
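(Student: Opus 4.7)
The plan is to split the discrete spectrum of $H$ according to the sign of the real part and then bound each piece by a result already in hand. Concretely, since $\sigma_{{\rm ess}}(H)=[0,\infty)$ contains no isolated eigenvalues, I can write the disjoint decomposition
\[
\sigma_{{\rm d}}(H)=\bigl(\sigma_{{\rm d}}(H)\cap \{z\in \mathbb C:{\rm Re}\,z\le 0\}\bigr)\,\sqcup\,\bigl(\sigma_{{\rm d}}(H)\cap \mathbb C_{>}\bigr),
\]
so that $\sum_{\lambda\in \sigma_{{\rm d}}(H)}|\lambda|^{\gamma}$ breaks into the corresponding two sums (eigenvalues on the imaginary axis are absorbed into the first piece).

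For the left-half piece I would simply invoke Corollary \ref{cor:FLLS}, which yields
\[
\sum_{\lambda\in \sigma_{{\rm d}}(H)\cap\{{\rm Re}\,z\le 0\}}|\lambda|^{\gamma}\le C_{\gamma,d}\|V\|_{L^{\gamma+d/2}(\mathbb R^d;\mathbb C)}^{\gamma+d/2}.
\]
For the right-half piece I would plug in (\ref{eq:C>esti}), which was just established from Proposition \ref{prop:CIIV} under the standing hypothesis $\alpha>3\pi/8$. Adding the two estimates term by term produces exactly the right-hand side of (\ref{eq:allA}), completing the proof.

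There is essentially no obstacle here beyond being careful about what lies on the axes: the essential spectrum fills out $[0,\infty)$, so no isolated eigenvalue can sit there, and any potential eigenvalue on the imaginary axis (away from the origin) is accounted for in $\{{\rm Re}\,z\le 0\}$. The condition $\alpha>3\pi/8$ is needed only for the right-half piece; it automatically exceeds any threshold required to invoke Corollary \ref{cor:FLLS}, which places no restriction on $\alpha$ beyond the mere fact that $V\in L^{\gamma+d/2}$. Thus the union of the two bounds is legitimate under a single hypothesis on $\alpha$, and no further analytic continuation or dilation argument is needed at this stage—the real work has already been done in Proposition \ref{prop:CIIV} and in Theorem \ref{thm:FLLS}.
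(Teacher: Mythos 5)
Your proposal is correct and is essentially identical to the paper's own (one-line) proof, which simply combines (\ref{eq:FLLS''}) for the left half-plane with (\ref{eq:C>esti}) for the right half-plane; your additional remarks about the axes just make explicit what the paper leaves implicit.
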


\begin{proof}
The desired estimate follows by combining (\ref{eq:FLLS''}) and (\ref{eq:C>esti}).
\end{proof}

\subsection{Estimates on Embedded Eigenvalues for Real Potentials}
We would like to estimate embedded eigenvalues of real Schr\"{o}dinger operators as Lieb--Thirring type.
We assume in this subsection that $H=H_0+V$ with real potentials $V$ have embedded eigenvalues $\{\lambda_{{\rm e}}\}$ in $\sigma_{{\rm ac}}(H)=[0,\infty)$.
Moreover, we write in this subsection $H_{{\rm c}}$ (resp. $H_{{\rm r}}$) for the Schr\"{o}dinger operator with the complex (resp. real) potential to prevent cunfusion.

We first derive the following estimate on isolated eigenvalues on the upper imaginary axis $i\mathbb R_{+}:=\{z\in \mathbb C:{\rm Re}\,z=0,\ {\rm Im}\,z>0\}$ of complex Schr\"{o}dinger operators.

\begin{thm}
Let $d,\gamma\ge 1$.
Suppose that $V\in {\bf D}(\mathscr{S}_{\alpha};\mathbb C)$ with $\alpha>\pi/4$.
Then, the eigenvalues of $H_{{\rm c}}$ on $i\mathbb R_{+}$ are estimated as follows:
\begin{align}
\label{eq:CLTiR+}
\sum_{\lambda\in \sigma_{{\rm d}}(H_{{\rm c}})\cap i\mathbb R_{+}}|\lambda|^{\gamma}
\le L_{\gamma,d}\|({\rm Im}\,V_{i\pi /4})_+\|_{L^{\gamma+d/2}(\mathbb R^d;\mathbb C)}^{\gamma+d/2}.
\end{align}
\end{thm}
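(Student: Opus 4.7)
The overall strategy mirrors the proof of Theorem \ref{thm:23CLT}: perform a single complex dilation with $i\phi=i\pi/4$ to rotate the target set $i\mathbb{R}_+$ onto the negative real axis, and then apply inequality (\ref{eq:FLLS'}) in its limiting form as $\kappa\downarrow 0$. The hypothesis $\alpha>\pi/4$ is exactly what guarantees $i\pi/4\in\mathscr{S}_\alpha\cap\mathbb{C}^+$, so the invariance results from \cite{So1} are in force.

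Concretely, I would first use Lemma \ref{lem:Some1} to conclude that $\sigma_{\rm d}(H_{\rm c})\cap i\mathbb{R}_+=\sigma_{\rm d}(H_{\rm c}(i\pi/4))\cap i\mathbb{R}_+$, which is legitimate because $i\mathbb{R}_+\subset\mathbb{C}^+$ in the paper's convention. Combining (\ref{eq:lpwlp}) with Lemma \ref{lem:Some2} then gives a multiplicity-preserving bijection $\lambda\mapsto\widetilde{\lambda}(i\pi/4)=e^{i\pi/2}\lambda=i\lambda$ between $\sigma_{\rm d}(H_{\rm c}(i\pi/4))\cap i\mathbb{R}_+$ and $\sigma_{\rm d}(\widetilde{H}(i\pi/4))\cap(-\infty,0)$; since $|i\lambda|=|\lambda|$, the two sums agree:
\[
\sum_{\lambda\in\sigma_{\rm d}(H_{\rm c})\cap i\mathbb{R}_+}|\lambda|^{\gamma}
=\sum_{\widetilde{\mu}\in\sigma_{\rm d}(\widetilde{H}(i\pi/4))\cap(-\infty,0)}|\widetilde{\mu}|^{\gamma}.
\]
The dilated operator here is $\widetilde{H}(i\pi/4)=H_0+e^{i\pi/2}V_{i\pi/4}=H_0+iV_{i\pi/4}$. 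Since $(-\infty,0)\subset\mathscr{C}_-(\kappa)$ for every $\kappa>0$, applying (\ref{eq:FLLS'}) to $\widetilde{H}(i\pi/4)$ with the potential $iV_{i\pi/4}$ and taking the infimum over $\kappa>0$ yields
\[
\sum_{\widetilde{\mu}\in\sigma_{\rm d}(\widetilde{H}(i\pi/4))\cap(-\infty,0)}|\widetilde{\mu}|^{\gamma}
\le L_{\gamma,d}\|({\rm Re}(iV_{i\pi/4}))_-\|_{L^{\gamma+d/2}(\mathbb{R}^d;\mathbb{C})}^{\gamma+d/2}.
\]
The elementary identity ${\rm Re}(iV_{i\pi/4})=-{\rm Im}\,V_{i\pi/4}$ converts $({\rm Re}(iV_{i\pi/4}))_-$ into $({\rm Im}\,V_{i\pi/4})_+$ and delivers (\ref{eq:CLTiR+}).

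The individual steps are short, so the main obstacle is bookkeeping rather than analysis. Two points merit particular care. First, one must verify that eigenvalues on $i\mathbb{R}_+$ remain isolated after dilation, which follows from Proposition \ref{prop:essSHip} since $\sigma_{\rm ess}(H_{\rm c}(i\pi/4))=e^{-i\pi/2}[0,\infty)$ lies on the negative imaginary axis and so is disjoint from $i\mathbb{R}_+$. Second, the limit $\kappa\downarrow 0$ in (\ref{eq:FLLS'}) must be justified, but this is immediate from the containment $(-\infty,0)\subset\mathscr{C}_-(\kappa)$ for all $\kappa>0$ together with the fact that the left-hand side restricted to $(-\infty,0)$ is $\kappa$-independent. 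No new analytic ingredient beyond Theorem \ref{thm:FLLS} and the dilation-analytic framework already recalled is needed.
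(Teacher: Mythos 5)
Your proof is correct and follows essentially the same route as the paper: dilate by $i\pi/4$ so that the eigenvalues on $i\mathbb{R}_+$ become negative real eigenvalues of $\widetilde{H}(i\pi/4)=H_0+iV_{i\pi/4}$, then apply the $\kappa\downarrow 0$ form of (\ref{eq:FLLS'}) and use ${\rm Re}(iV_{i\pi/4})=-{\rm Im}\,V_{i\pi/4}$. Your write-up is in fact slightly more careful than the paper's, since you make explicit both the justification of the $\kappa\downarrow 0$ limit and the isolation of the rotated eigenvalues via Proposition \ref{prop:essSHip}.
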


\begin{proof}
We take eigenvalues $\{\lambda\}$ of $H_{{\rm c}}$ on $i\mathbb R_{+}$.
We obtain $\{\lambda\}=\{\lambda(i\phi)\}$ by virtue of Lemma \ref{lem:Some1}, and $\{\widetilde{\lambda}(i\phi)\}=\{e^{i(2\phi)}\lambda(i\phi)\}$ by virtue of (\ref{eq:lpwlp}).
So, setting $\phi=\pi/4$ implies that $\{\widetilde{\lambda}(i\pi/4)\}=\{e^{i\pi/2}\lambda(i\pi/4)\}=\{i\lambda\}$ and all of $i\lambda$ lie on $\mathbb R_{-}$.
Also, these $i\lambda$ correspond to negative real eigenvalues under the potential $iV_{i\pi/4}$.
Thus, the standard Lieb--Thirring inequality implies that
\begin{align*}
\sum_{\lambda\in \sigma_{{\rm d}}(H_{{\rm c}})\cap i\mathbb R_{+}}|\lambda|^{\gamma}
&=\sum_{\widetilde{\lambda}(i\pi/4)\in \sigma_{{\rm d}}(H_{{\rm r}})}|\widetilde{\lambda}(i\pi/4)|^{\gamma} \\
&\le L_{\gamma,d}\|{\rm Re}(iV_{i\pi/4})_-\|_{L^{\gamma+d/2}({\mathbb R}^{d};\mathbb R)}^{\gamma+d/2} \\
&=L_{\gamma,d}\|({\rm Im}\,V_{i\pi /4})_+\|_{L^{\gamma+d/2}(\mathbb R^d;\mathbb C)}^{\gamma+d/2}.
\end{align*}
Here the above estimate is discussed with the property of multiplicities of eigenvalues: Lemma \ref{lem:Some2}.
This completes the proof.
\end{proof}

We next prove the following estimate by double complex dilation in the same way as Theorem \ref{thm:Reso}.

\begin{thm}
Let $d,\gamma\ge 1$.
Suppose that $V\in {\bf D}(\mathscr{S}_{\alpha};\mathbb R)$ with $\alpha>\pi/2$.
Then, the embedded eigenvalues of $H_{{\rm r}}$ in $[0,\infty)$ are estimated as follows:
\begin{align}
\sum_{\lambda_{{\rm e}}\in \sigma_{{\rm pp}}(H_{{\rm r}})\cap [0,\infty)}\lambda_{{\rm e}}^{\gamma}
\le L_{\gamma,d}\|({\rm Re}\,V_{i\pi /2})_{+}\|_{L^{\gamma+d/2}(\mathbb R^d;\mathbb R)}^{\gamma+d/2}.
\end{align}
\end{thm}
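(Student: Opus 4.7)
\medskip
\noindent\textbf{Proof proposal.}
The plan is to reduce the embedded eigenvalue estimate to a negative-real eigenvalue estimate for a complex Schr\"{o}dinger operator by a total complex dilation of angle $i\pi/2$, following the pattern of Theorem \ref{thm:Reso}. Since $V_{i\pi/2} = V_{i\pi/4,\, i\pi/4}$, this can be viewed as the ``double complex dilation'' $U(i\pi/4)U(i\pi/4)$, which also explains the hypothesis $\alpha > \pi/2$: we need $i\pi/2$ to lie in the open strip $\mathscr{S}_\alpha$.

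First, I would invoke the real-potential analogues of Lemma \ref{lem:Some1}--\ref{lem:Some2} together with the identity (\ref{eq:pp0id0i}) in Remark \ref{rem:emb}: every embedded eigenvalue $\lambda_{{\rm e}} \in (0,\infty)$ of $H_{{\rm r}}$ persists as an isolated eigenvalue of $H_{{\rm r}}(i\phi)$ at the same location with the same algebraic multiplicity. Taking $\phi = \pi/2$, the essential spectrum of $H_{{\rm r}}(i\pi/2)$ equals $e^{-i\pi}[0,\infty) = (-\infty,0]$ by Proposition \ref{prop:essSHip}, so $\lambda_{{\rm e}}$ remains isolated and the identification still holds.

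Next, (\ref{eq:lpwlp}) gives $\widetilde{\lambda}(i\pi/2) = e^{i\pi}\lambda_{{\rm e}} = -\lambda_{{\rm e}}$, so the collection $\{\lambda_{{\rm e}}\}$ corresponds bijectively, with preserved multiplicities, to isolated negative real eigenvalues of
\[
\widetilde{H}_{{\rm r}}(i\pi/2) = H_0 + e^{i\pi}V_{i\pi/2} = H_0 - V_{i\pi/2}.
\]
Since $-V_{i\pi/2}$ is genuinely complex valued, the real Lieb--Thirring inequality (\ref{eq:rLT}) does not apply directly; instead I would apply (\ref{eq:FLLS'}) to the operator $H_0 - V_{i\pi/2}$ and pass to the limit $\kappa \downarrow 0$ (the very limiting procedure the authors use after Corollary \ref{cor:FLLS} to recover (\ref{eq:rLT})). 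This yields
\[
\sum_{\lambda_{{\rm e}}} \lambda_{{\rm e}}^{\gamma}
= \sum |\widetilde{\lambda}(i\pi/2)|^{\gamma}
\le L_{\gamma,d}\,\|({\rm Re}(-V_{i\pi/2}))_-\|_{L^{\gamma+d/2}(\mathbb R^d;\mathbb R)}^{\gamma+d/2}
= L_{\gamma,d}\,\|({\rm Re}\,V_{i\pi/2})_+\|_{L^{\gamma+d/2}(\mathbb R^d;\mathbb R)}^{\gamma+d/2},
\]
where the final equality uses the elementary identity $(-a)_- = a_+$ for real $a$.

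The step I expect to require the most care is the endpoint case $\phi = \pi/2$, since Remark \ref{rem:emb} is phrased only for $\phi \in (0,\min\{\alpha,\pi/2\})$. I would handle it by noting that for every $\phi \in (0,\pi/2]$ the rotated essential spectrum $e^{-2i\phi}[0,\infty)$ stays bounded away from the fixed positive value $\lambda_{{\rm e}}$, so the associated Riesz spectral projection depends analytically on $\phi$ and the multiplicity is preserved as $\phi \uparrow \pi/2$; combining this with the type-(A) analyticity of $H_{{\rm r}}(\theta)$ on $\mathscr{S}_\alpha$ closes the boundary gap at $\phi = \pi/2$.
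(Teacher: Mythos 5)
Your argument is correct and reaches the same right-hand side, but it is structured differently from the paper's proof. The paper performs a genuine ``double complex dilation'': it first dilates by $i\pi/4$, using (\ref{eq:pp0id0i}) safely in the interior of its range $\phi\in(0,\min\{\alpha,\pi/2\})$ to move $\{\lambda_{\rm e}\}$ to $\{i\lambda_{\rm e}\}\subset i\mathbb R_+$ as eigenvalues of $H_0+iV_{i\pi/4}$, and then invokes the preceding theorem, i.e.\ the estimate (\ref{eq:CLTiR+}) for eigenvalues of a complex Schr\"odinger operator on $i\mathbb R_+$, applied to the potential $iV_{i\pi/4}$; since $(iV_{i\pi/4})_{i\pi/4}=iV_{i\pi/2}$ and ${\rm Im}(iV_{i\pi/2})={\rm Re}\,V_{i\pi/2}$, this yields the stated bound. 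You instead perform a single dilation by $i\pi/2$, land the eigenvalues at $-\lambda_{\rm e}<0$ for the operator $H_0-V_{i\pi/2}$, and apply (\ref{eq:FLLS'}) in the limit $\kappa\downarrow 0$ together with $(-a)_-=a_+$. The two routes are algebraically equivalent (the paper's composed dilation is exactly $V_{i\pi/4,i\pi/4}=V_{i\pi/2}$, and the proof of (\ref{eq:CLTiR+}) itself reduces to the same $\kappa\downarrow 0$ consequence of (\ref{eq:FLLS'})), so what your version buys is economy: it bypasses the intermediate theorem on $i\mathbb R_+$ entirely. The price is that you must justify the identification of embedded eigenvalues at the endpoint $\phi=\pi/2$, which lies outside the stated range of (\ref{eq:pp0id0i}); your closing argument---that for $\alpha>\pi/2$ the family is type (A) analytic at $i\pi/2$, the rotated essential spectrum $e^{-2i\phi}[0,\infty)$ stays away from $\lambda_{\rm e}>0$ for all $\phi\in(0,\pi/2]$, and the Riesz projection therefore continues analytically with constant rank---is the standard Kato argument and does close this gap, whereas the paper's two-step decomposition keeps each individual dilation angle strictly inside $(0,\pi/2)$ and so never confronts the endpoint explicitly.
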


\begin{proof}
It is sufficient to consider positive embedded eigenvalues $\{\lambda_{{\rm e}}\}$ of $H_{{\rm r}}$, because $\sum_{\lambda_{{\rm e}}\in [0,\infty)}|\lambda_{{\rm e}}|^{\gamma}=\sum_{\lambda_{{\rm e}}\in (0,\infty)}|\lambda_{{\rm e}}|^{\gamma}$.
We obtain $\{\lambda_{{\rm e}}\}=\{\lambda(i\phi)\}$ by virtue of (\ref{eq:pp0id0i}), and $\{\widetilde{\lambda}(i\phi)\}=\{e^{i(2\phi)}\lambda(i\phi)\}$ by virtue of (\ref{eq:lpwlp}).
So, setting $\phi=\pi/4$ implies that $\{\widetilde{\lambda}(i\pi/4)\}=\{i\lambda_{{\rm e}}\}$ and all of $i\lambda_{{\rm e}}$ lie on $i\mathbb R_{+}$.
Also, these $\lambda_{{\rm e}}$ correspond to purely imaginary eigenvalues under the potential $iV_{i\pi/4}$.
Thus, (\ref{eq:CLTiR+}) implies that
\begin{align*}
\sum_{\lambda\in \sigma_{{\rm pp}}(H_{{\rm r}})\cap [0,\infty)}\lambda_{{\rm e}}^{\gamma}
&=\sum_{\widetilde{\lambda}(i\pi/4)\in \sigma_{{\rm d}}(H_{{\rm r}})}|\widetilde{\lambda}(i\pi/4)|^{\gamma} \\
&\le L_{\gamma,d}\|{\rm Im}(iV_{i\pi/4,i\pi/4})_+\|_{L^{\gamma+d/2}({\mathbb R}^{d};\mathbb R)}^{\gamma+d/2} \\
&=L_{\gamma,d}\|({\rm Re}\,V_{i\pi/2})_+\|_{L^{\gamma+d/2}(\mathbb R^d;\mathbb C)}^{\gamma+d/2}.
\end{align*}
Here the above estimate is discussed with the property of multiplicities of (positive) embedded eigenvalues: Lemma \ref{lem:Some2} and Remark \ref{rem:emb}.
Hence we have gained the proof.
\end{proof}

\section{Appendix}
We are interested in how the complex dilation affects the accuracy of eigenvalue estimates.
In this appendix, we investigate the $L^{p}$-norms of dilated potentials via examples.

\subsection{$\|V\|_{L^{\gamma+d/2}(\mathbb R^d;\mathbb C)}$ v.s. $\|V_{i\phi}\|_{L^{\gamma+d/2}(\mathbb R^d;\mathbb C)}$}
We first argue the comparison of values of $\|V\|_{L^2(\mathbb R^d;\mathbb C)}$ and $\|V_{i\phi}\|_{L^2(\mathbb R^d;\mathbb C)}$.
Recall that the real potential $V$ which belongs to $L^p(\mathbb R^d)$ for $p>\max\{2-\varepsilon,d/2\}$ with any $\varepsilon>0$ is $H_0$-compact.
That is, we only need to show that $V\in L^p(\mathbb R^d)$ if $p\ge 2$ and $p>d/2$
in order to verify that $V$ is $H_0$-compact, as is well known.

We feel that complex dilation may increase the norm of the potential in general. 
(One of such examples can be actually seen in \cite{So1}. 
See also Proposition \ref{prop:merV} that will be mentioned later.)
However, the following example gives us that our feeling is {\bf not} always true.

\begin{prop}
Let $d\ge 1$ and $\gamma\ge \max\{2-d/2,1\}$.
Suppose that the potential $V$ is defined as a multiplication operator with a Gauss-type function
\begin{align}
\label{eq:Gausstf}
V(x)=e^{-cx^2},\quad c\in \{z\in \mathbb C:{\rm Re}\,z>0\}
\end{align}
 on $\mathbb R^d$.
Then, $V\in {\bf D}(\mathscr{S}_{\alpha};\mathbb C)$ for any $i\phi\in \mathscr{S}_{\alpha}$ obeying
\begin{align}
\label{eq:cphi}
({\rm Re}\,c)\cos 2\phi>({\rm Im}\, c)\sin 2\phi,
\end{align}
and the followings hold:
\begin{itemize}
\item[1)] If ${\rm Re}\,c\ge ({\rm Re}\,c)\cos 2\phi-({\rm Im}\, c)\sin 2\phi$, then one has
\[
\|V\|_{L^{\gamma+1/2}(\mathbb R^d;\mathbb C)}\le \|V_{i\phi}\|_{L^{\gamma+1/2}(\mathbb R^d;\mathbb C)}.
\]
\item[2)] If ${\rm Re}\,c\le ({\rm Re}\,c)\cos 2\phi-({\rm Im}\, c)\sin 2\phi$, then one has
\[
\|V_{i\phi}\|_{L^{\gamma+1/2}(\mathbb R^d;\mathbb C)}\le \|V\|_{L^{\gamma+1/2}(\mathbb R^d;\mathbb C)}.
\]
\end{itemize}
\end{prop}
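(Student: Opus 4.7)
The plan is to reduce both $L^{p}$-norms to elementary Gaussian integrals and then read off the comparison directly from the exponents. I would first compute the dilated potential explicitly: for $V(x)=e^{-c|x|^{2}}$ with $\operatorname{Re}c>0$,
\[
V_{i\phi}(x)=V(e^{i\phi}x)=\exp\bigl(-ce^{2i\phi}|x|^{2}\bigr),\qquad |V_{i\phi}(x)|=e^{-b|x|^{2}},
\]
where I set $a:=\operatorname{Re}c$ and $b:=\operatorname{Re}(ce^{2i\phi})=(\operatorname{Re}c)\cos 2\phi-(\operatorname{Im}c)\sin 2\phi$. Hypothesis (\ref{eq:cphi}) is precisely $b>0$, which is exactly what will be needed to guarantee $L^{p}$-integrability of the dilated potential.

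Next I would check $V\in\mathbf{D}(\mathscr{S}_{\alpha};\mathbb{C})$. Conditions (i) and (ii) of Definition \ref{df:DAC} are immediate: the Gaussian lies in $L^{p}\cap L^{\infty}$ for every $p\ge 1$, and the standard Kato/Rellich $H_{0}$-compactness criterion applies once $p\ge 2$ and $p>d/2$, both of which hold under the stated range of $\gamma$ and $d$. For (iii) and (iv), the map $\theta\mapsto V_{\theta}(x)=\exp(-ce^{2\theta}|x|^{2})$ is entire in $\theta$, and assuming (\ref{eq:cphi}) holds uniformly on $\mathscr{S}_{\alpha}$, the $L^{p}$-valued and $\mathbf{B}(L^{2})$-valued analyticity follow from dominated convergence applied to the explicit Gaussian formula below.

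The core computation is then a pair of classical Gaussian integrals. With $p=\gamma+1/2$ and the standard identity $\int_{\mathbb{R}^{d}}e^{-ps|x|^{2}}\,{\rm d}x=(\pi/(ps))^{d/2}$ valid for $s>0$, I will obtain
\[
\|V\|_{L^{p}(\mathbb{R}^{d};\mathbb{C})}^{p}=\left(\frac{\pi}{pa}\right)^{d/2},\qquad \|V_{i\phi}\|_{L^{p}(\mathbb{R}^{d};\mathbb{C})}^{p}=\left(\frac{\pi}{pb}\right)^{d/2}.
\]
Since the map $s\mapsto(\pi/(ps))^{d/2}$ is strictly decreasing in $s>0$, comparing the two $L^{p}$-norms reduces verbatim to comparing $a$ with $b$ in the opposite direction; this gives 1) when $a\ge b$ and 2) when $a\le b$, which is exactly the content of the two statements.

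The main obstacle is not the integral itself but the bookkeeping step of verifying $V\in\mathbf{D}(\mathscr{S}_{\alpha};\mathbb{C})$: condition (\ref{eq:cphi}) is stated pointwise in a single $\phi$, while Definition \ref{df:DAC} demands analyticity and $L^{p}$-control throughout the strip $\mathscr{S}_{\alpha}$. This forces an implicit upper bound on $\alpha$ in terms of $\arg c$ so that $\operatorname{Re}(ce^{2\theta})>0$ uniformly for $\theta\in\mathscr{S}_{\alpha}$; once that bound is respected, the remainder of the argument is a direct evaluation of two Gaussians and a monotonicity observation.
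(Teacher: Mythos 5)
Your proof is correct and follows essentially the same route as the paper's: identify $|V_{i\phi}(x)|=e^{-b|x|^2}$ with $b=(\mathrm{Re}\,c)\cos 2\phi-(\mathrm{Im}\,c)\sin 2\phi$, evaluate the two Gaussian integrals explicitly, and conclude from the monotonic decrease of $s\mapsto(\pi/(ps))^{d/2}$; the only cosmetic difference is that the paper reduces to $d=1$ via the product structure of the Gaussian while you integrate directly over $\mathbb{R}^d$, and your remark that condition (\ref{eq:cphi}) must hold uniformly on the strip to get membership in ${\bf D}(\mathscr{S}_{\alpha};\mathbb C)$ is a point the paper leaves implicit.
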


\begin{proof}
It is not difficult to see that $V$ is dilation analytic on $\mathscr{S}_{\alpha}$ for all $\gamma\ge \max\{2-d/2,1\}$ with any $d\ge 1$.
It is however sufficient to prove this proposition for $d=1$ by virtue of the exponential law.
We assume (\ref{eq:cphi}). 
Then, $V_{i\phi}\in L^{\gamma+1/2}(\mathbb R;\mathbb C)$ and we have
\begin{align*}
\|V\|_{L^{\gamma+1/2}(\mathbb R;\mathbb C)}^{\gamma+1/2}&=\int_{-\infty}^{\infty}|e^{-cx^2}|^{\gamma+1/2}\,{\rm d}x=\left(\frac{\pi}{({\rm Re}\,c)(\gamma+1/2)}\right)^{1/2}, 
\end{align*}
\begin{align}
\label{eq:Vipnorm}
\begin{aligned}
\|V_{i\phi}\|_{L^{\gamma+1/2}(\mathbb R;\mathbb C)}^{\gamma+1/2}&=\int_{-\infty}^{\infty}|e^{-c(e^{i\phi}x)^2}|^{\gamma+1/2}\,{\rm d}x \\
&=\left(\frac{\pi}{\{({\rm Re}\,c)\cos 2\phi-({\rm Im}\, c)\sin 2\phi\}(\gamma+1/2)}\right)^{1/2}.
\end{aligned}
\end{align}
Hence, the proof of this theorem completes.
\end{proof}

\subsection{On Monotonicity of $\|V_{i\phi}\|_{L^{\gamma+d/2}(\mathbb R^d;\mathbb C)}$}
We finally investigate whether $\|V_{i\phi}\|_{L^{\gamma+d/2}(\mathbb R^d;\mathbb C)}$ is monotonic with respect to the dilation angle $\phi$. 
We feel that the more complex dilation we give, the bigger the values of norms of dilation analytic potentials may be.
In fact, we can see an example that affirms our feeling as follows.

\begin{prop}[cf. \cite{So1}]
\label{prop:merV}
Let $d=1$ and $\gamma\ge 3/2$.
We define the potential $V$ as a multiplication operator by 
\[
V(x)=\frac{c}{(1+x^2)^{s}},\quad s>\frac{1}{2\gamma+1},\quad c\in \mathbb C.
\]
Then, $V\in {\bf D}(\mathscr{S}_{\alpha};\mathbb C)$ and $\{\|V_{i\phi}\|_{L^{\gamma+1/2}(\mathbb R;\mathbb C)}\}_{\phi\in [0,\pi/2)}$, $i\phi\in \mathscr{S}_{\alpha}$, is always monotone increasing.
\end{prop}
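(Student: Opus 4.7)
The plan is to reduce everything to a single explicit pointwise computation of $|V_{i\phi}(x)|$. Writing $V_{i\phi}(x)=c/(1+e^{2i\phi}x^2)^{s}$ and expanding the modulus via
\[
|1+e^{2i\phi}x^2|^2=(1+x^2\cos 2\phi)^2+x^4\sin^2 2\phi=1+2x^2\cos 2\phi+x^4,
\]
I would obtain the clean formula
\[
|V_{i\phi}(x)|^{\gamma+1/2}=\frac{|c|^{\gamma+1/2}}{(1+2x^2\cos 2\phi+x^4)^{s(\gamma+1/2)/2}}.
\]
All subsequent assertions will be read off from this single expression.

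Next I would verify the admissibility $V\in\mathbf{D}(\mathscr{S}_{\alpha};\mathbb{C})$ (for any $\alpha<\pi/2$). Viewing $y=x^2\ge 0$, the biquadratic $y^2+2y\cos 2\phi+1$ has discriminant $4(\cos^2 2\phi-1)\le 0$ with equality only at $\phi=\pi/2$, so for $\phi\in[0,\pi/2)$ the denominator stays strictly positive and behaves like $|x|^{2s(\gamma+1/2)}$ at infinity; the hypothesis $s>1/(2\gamma+1)$ is precisely the integrability threshold, so condition i) holds. Condition ii) follows from boundedness and decay, and conditions iii)-iv) follow because the equation $1+e^{2\theta}x^{2}=0$ forces $\mathrm{Im}\,\theta\in\tfrac{\pi}{2}+\pi\mathbb{Z}$, which is excluded in the strip $\mathscr{S}_{\alpha}$ provided $\alpha<\pi/2$.

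The core step is differentiation in $\phi$. Since
\[
\frac{\partial}{\partial\phi}\bigl(1+2x^2\cos 2\phi+x^4\bigr)=-4x^2\sin 2\phi<0
\]
for $\phi\in(0,\pi/2)$ and $x\ne 0$, the integrand is a strictly increasing function of $\phi$ at every such $x$. Integrating in $x$ (dominated convergence on any closed subinterval of $[0,\pi/2)$ controls the interchange), the quantity $\|V_{i\phi}\|_{L^{\gamma+1/2}(\mathbb{R};\mathbb{C})}^{\gamma+1/2}$ is strictly increasing in $\phi$, and hence so is $\|V_{i\phi}\|_{L^{\gamma+1/2}(\mathbb{R};\mathbb{C})}$ itself.

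I do not anticipate any serious obstacle. The only genuine subtlety is the need to stop strictly short of $\phi=\pi/2$: there the denominator degenerates to $(x^{2}-1)^{2}$, introducing non-integrable singularities at $x=\pm1$, which is exactly why the statement restricts to $\phi\in[0,\pi/2)$ and why the admissibility discussion requires $\alpha<\pi/2$. Everything else is routine calculus from the formula above.
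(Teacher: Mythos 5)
Your proof is correct and follows essentially the same route as the paper: compute $|V_{i\phi}(x)|$ explicitly, observe that the denominator $1+2x^2\cos 2\phi+x^4$ is pointwise (in $x$) decreasing in $\phi$ on $[0,\pi/2)$, and conclude monotonicity of the norm by integrating. In fact your exponent $s(\gamma+1/2)/2$ is the correct one (the paper's displayed exponent $s(2\gamma+1)$ is off by a factor of $4$, which does not affect the argument since the decay rate $x^{-s(2\gamma+1)}$ at infinity still matches the threshold $s>1/(2\gamma+1)$), and your additional checks of the admissibility conditions only make explicit what the paper leaves implicit.
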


\begin{proof}
It is easy to see that $V\in L^{\gamma+1/2}(\mathbb R;\mathbb C)$ and $V$ is $H_0$-compact, if $s>1/(2\gamma+1)$ and $\gamma\ge 3/2$.
Since
\begin{align}
\label{eq:Cs4s2g1}
|V_{i\phi}(x)|^{\gamma+1/2}
=\frac{|c|^{\gamma+1/2}}{(x^4+2(\cos 2\phi)x^2+1)^{s(2\gamma+1)}}
\le \frac{C_{\gamma}}{x^{4s(2\gamma+1)}}
\end{align}
for a suitable constant $C_{\gamma}>0$ depending on $\gamma$, we also have $V_{i\phi}\in L^{\gamma+1/2}(\mathbb R;\mathbb C)$ because of $s>1/(2\gamma+1)$.
It is not difficult to see that $V$ is dilation analytic from the above.
We now consider the function $F(\phi):=2x^2\cos 2\phi+(x^4+1)$ with respect to $\phi$ by fixing $x\in \mathbb R$.
Since $F$ is monotone decreasing on $[0,\pi/2)$, the proof of this proposition completes from (\ref{eq:Cs4s2g1}).
\end{proof}

As the above, we feel that the norms of dilated potentials may have monotonically increasing properties.
We can however see that our feeling is {\bf not} always true as follows.

\begin{prop}
Let $d=1$ and $\gamma\ge 3/2$. 
For $V\in {\bf D}(\mathscr{S}_{\alpha};\mathbb C)$ defined as a multiplication operator with a Gauss-type function (\ref{eq:Gausstf}) for any $i\phi\in \mathscr{S}_{\alpha}$ obeying (\ref{eq:cphi}), the followings hold:
\begin{itemize}
\item[1)] If ${\rm Im}\, c>0$, then $\{\|V_{i\phi}\|_{L^{\gamma+1/2}(\mathbb R;\mathbb C)}\}_{\phi}$ is monotone increasing.
\item[2)] If ${\rm Im}\, c<0$ and $\phi \in [0,p)$ (resp. $[p,\pi/2)$), then $\{\|V_{i\phi}\|_{L^{\gamma+1/2}(\mathbb R;\mathbb C)}\}_{\phi}$ is monotone decreasing (resp. monotone increasing).
Here
\begin{align}
\label{eq:cp}
p:=\frac{1}{2}{\rm Arctan}\left(-\frac{{\rm Im}\,c}{{\rm Re}\,c}\right).
\end{align}
\end{itemize}
\end{prop}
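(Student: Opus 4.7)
The plan is to read off the monotonicity directly from the explicit formula for the norm already derived as (\ref{eq:Vipnorm}) in the previous proposition. That identity gives
\[
\|V_{i\phi}\|_{L^{\gamma+1/2}(\mathbb R;\mathbb C)}^{\gamma+1/2}
= C_{\gamma}\, f(\phi)^{-1/2},\qquad
f(\phi):=({\rm Re}\,c)\cos 2\phi-({\rm Im}\, c)\sin 2\phi,
\]
with $C_{\gamma}>0$ independent of $\phi$; the hypothesis (\ref{eq:cphi}) is precisely $f(\phi)>0$, so the square root is well-defined on the admissible range. Monotonicity of $\phi\mapsto \|V_{i\phi}\|_{L^{\gamma+1/2}}$ is therefore exactly the reverse of that of $f$, and the problem reduces to elementary single-variable calculus, so the $d=1$ reduction (via the exponential law, as in the previous proposition) and the verification that $V\in {\bf D}(\mathscr{S}_{\alpha};\mathbb C)$ need not be repeated.

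Writing $a={\rm Re}\,c>0$ and $b={\rm Im}\,c$, I would use the harmonic-addition identity $f(\phi)=\sqrt{a^2+b^2}\,\cos(2\phi+\psi)$ with $\psi={\rm Arctan}(b/a)\in(-\pi/2,\pi/2)$, so that $f'(\phi)=-2\sqrt{a^2+b^2}\,\sin(2\phi+\psi)$, and the sign of $f'$ is the opposite of the sign of $\sin(2\phi+\psi)$. For 1), when $b>0$ one has $\psi\in(0,\pi/2)$, and the admissibility condition $f(\phi)>0$ forces $2\phi+\psi\in[\psi,\pi/2)$, on which the sine is strictly positive; hence $f'<0$ throughout, so $f$ is strictly decreasing and the norm is strictly increasing.

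For 2), when $b<0$ one has $\psi\in(-\pi/2,0)$, and $\sin(2\phi+\psi)$ vanishes precisely at $\phi=-\psi/2=\tfrac{1}{2}{\rm Arctan}(-b/a)=p$, which indeed lies inside the admissible interval since $p\in(0,\pi/4)$ and the admissibility condition reads $2\phi+\psi\in(-\pi/2,\pi/2)$. On $[0,p)$ the quantity $2\phi+\psi$ is negative, so $\sin(2\phi+\psi)<0$ and $f'>0$; hence $f$ is increasing and the norm is decreasing. On $(p,\pi/2)$ intersected with the admissible range, $2\phi+\psi$ is positive, so $f'<0$ and the norm is increasing, matching (\ref{eq:cp}). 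The only thing to keep straight is the sign of $\psi$ in the two sub-cases and the restriction to the range where (\ref{eq:cphi}) holds; no serious analytical obstacle arises, since the norm has already been reduced to an elementary closed-form Gaussian integral in (\ref{eq:Vipnorm}).
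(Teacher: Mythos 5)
Your proof is correct and follows essentially the same route as the paper: read the monotonicity of $\phi\mapsto\|V_{i\phi}\|_{L^{\gamma+1/2}}$ off the closed form (\ref{eq:Vipnorm}) by analyzing the sign of $F'(\phi)$ for $F(\phi)=({\rm Re}\,c)\cos 2\phi-({\rm Im}\,c)\sin 2\phi$. Your execution is in fact the more careful one. First, by writing $F(\phi)=\sqrt{a^2+b^2}\cos(2\phi+\psi)$ and restricting to the admissible range where (\ref{eq:cphi}) holds, you correctly handle case 1): the paper asserts that $F$ is monotone decreasing on all of $[0,\pi/2)$ merely from $F(0)>0$ and $F(\pi/2^-)<0$, which is not valid as stated ($F$ has an interior minimum at $2\phi+\psi=\pi$ when ${\rm Im}\,c>0$); the claim is only true on the subinterval where $F>0$, which is exactly where you argue. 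Second, in case 2) your sign bookkeeping gives $F'(0)=-2\,{\rm Im}\,c>0$, so $F$ increases on $[0,p)$ and decreases past $p$, hence the norm decreases on $[0,p)$ and increases on $[p,\cdot)$ within the admissible range --- which agrees with the statement of the proposition, whereas the paper's own proof text reverses the monotonicity of $F$ in this case and consequently ends with a conclusion opposite to the proposition it is proving. So your argument is the one to trust here; no gap on your side.
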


\begin{proof}
We consider the function $F(\phi):=({\rm Re}\,c)\cos 2\phi-({\rm Im}\, c)\sin 2\phi$ with respect to $\phi\in [0,\pi/2)$.
Remark ${\rm Re}\,c>0$.
Since we have $F'(\phi)=-2\{({\rm Re}\,c)\sin 2\phi+({\rm Im}\,c)\cos 2\phi\}$, we obtain the critical point $p$ defined by (\ref{eq:cp}) by solving $F'(\phi)=0$.

1) We assume ${\rm Im}\,c>0$. 
Then, $p<0$, 
\begin{align}
F(0)={\rm Re}\,c>0\quad {\rm and} \label{eq:F0}\\ 
\lim_{\phi\uparrow \pi/2}F(\phi)=-a<0. \label{eq:Fphi}
\end{align}
Thus, $F$ is monotone decreasing on $[0,\pi/2)$.
Hence, (\ref{eq:Vipnorm}) implies that $\{\|V_{i\phi}\|_{L^{\gamma+1/2}(\mathbb R;\mathbb C)}\}_{\phi}$ is monotone increasing on $[0,\pi/2)$.

2) We assume ${\rm Im}\,c<0$. 
Then, $p>0$, (\ref{eq:F0}), (\ref{eq:Fphi}) and
\[
F(p)=({\rm Re}\,c)\cos\left({\rm Arctan}\left(-\frac{{\rm Im}\,c}{{\rm Re}\,c}\right)\right)-({\rm Im}\,c)\sin\left({\rm Arctan}\left(-\frac{{\rm Im}\,c}{{\rm Re}\,c}\right)\right)>0
\]
because of ${\rm Re}\,c>0$.
Thus, $F$ is monotone decreasing on $[0,p)$ and is monotone increasing on $[p,\pi/2)$.
Hence, (\ref{eq:Vipnorm}) implies that $\{\|V_{i\phi}\|_{L^{\gamma+1/2}(\mathbb R;\mathbb C)}\}_{\phi}$ is monotone increasing on $[0,p)$ and is monotone decreasing on $[p,\pi/2)$.
\end{proof}
\vspace{3mm}

{\small
\begin{center}
{\sc Acknowledgement}
\end{center}
The author would like to thank referees for giving him suitable and valuable advice.
He also appreciates the researchers who listened to and commented my presentation of the present paper at various conferences.
}

{\small 

\vspace{4mm}

\noindent
{\sc Norihiro Someyama}
\vspace{2mm}

Shin-yo-ji Buddhist Temple, 5-44-4 Minamisenju, Arakawa-ku, Tokyo 116-0003 Japan

E-mail: {\tt philomatics@outlook.jp}

ORCID iD: https://orcid.org/0000-0001-7579-5352
\\

He received a M.Sc. degree from Gakushuin University in 2014 and completed the Ph.D program without a Ph.D. degree the same university in 2017.
He is a head priest of Shin-yo-ji Buddhist Temple in Japan.
His research interests are the spectral theory of Schr\"{o}dinger operators and the theory of Schr\"{o}dinger equations on the fuzzy spacetime.
He received the Member Encouragement Award of Biomedical Fuzzy System Association for his lecture entitled `{\it Characteristic Analysis of Fuzzy Graph and its Application IV}' in November 2018 and the Excellent Presentation Award of National Congress of Theoretical and Applied Mechanics / JSCE Applied Mechanics Symposium for his lecture entitled `{\it Number of Eigenvalues of Non-self-adjoint Schr\"{o}dinger Operators with Dilation Analytic Complex Potentials}' in October 2019.
}
\end{document}